\def \C {\mathbb{C}}
\def \fol {{\mathcal F}}
\def \F {{\mathcal F}}
\def \sing {{\rm Sing}}
\def \cod {{\rm Cod}}
\newtheorem{proposition}{Proposition}[section]
\newtheorem{definition}[proposition]{Definition}
\newtheorem{cor}[proposition]{Corollary}
\newtheorem{lema}[proposition]{Lemma}
\newtheorem{teo}[proposition]{Theorem}
\newtheorem{example}[proposition]{Example}
\newtheorem{re}[proposition]{Remark}
\begin{document}

\title[Classification of holomorphic  Foliations on Hopf manifolds]{Classification of holomorphic foliations on Hopf manifolds}

\author{Maur\'icio Corr\^ea }
\thanks{The first author is partially supported by CNPq grant number 300352/2012-3 and FAPEMIG grant number PPM-00169-13. The second author is partially supported by CAPES/Brazil.}
\address{\noindent Maur\' \i cio Corr\^ea\\
Departamento de Matem\'atica \\
Universidade Federal de Minas Gerais\\
Av. Ant\^onio Carlos 6627 \\
30123-970 Belo Horizonte MG, Brazil} \email{mauricio@mat.ufmg.br}

\author{Arturo Fern\'andez-P\'erez }

\address{Arturo Fern\'andez P\'erez \\
Departamento de Matem\'atica \\
Universidade Federal de Minas Gerais\\
Av. Ant\^onio Carlos 6627 \\
30123-970 Belo Horizonte MG, Brazil} \email{arturofp@mat.ufmg.br}

\author{Antonio M. Ferreira}

\address{Antonio Marcos Ferreira\\
Departamento de Matem\'atica \\
Universidade Federal de Minas Gerais\\
Av. Ant\^onio Carlos 6627 \\
30123-970 Belo Horizonte MG, Brazil} \email{ antoniomfs@gmail.com}

\subjclass[2010]{Primary 32S65, 37F75, 32M25} \keywords{ Holomorphic foliations, Hopf manifolds}

\begin{abstract}
We classify nonsingular holomorphic foliations of dimension and codimension one
on certain Hopf manifolds. We prove that any nonsingular codimension one distribution on an intermediary or generic Hopf manifold is integrable and admits a holomorphic first integral.
 Also, we prove  some results about singular holomorphic distributions on Hopf manifolds.

\end{abstract}
\maketitle

\section{Introduction and statement of results}
Let $W=\mathbb{C}^n-\{0\}$, $ n\geq 2$, and $f(z_1, z_2,...,
z_n)=(\mu_1 z_1,\mu_2 z_2,...,\mu_n z_n)$ be a diagonal contraction in $\mathbb{C}^n$, where $0<|\mu_i|<1$ for
all $1\leq i\leq n$. The space quotient
$X=W/<f>$ is a compact, complex manifold of dimension $n$ called of
Hopf manifold. The   geometry and topology of  Hopf manifolds have been studied by several authors, see for instance, Dabrowski \cite{Da}, Haefliger \cite{Hae}, Ise \cite{Ise}, etc.
\par In this paper, we are interested in the study of holomorphic foliations on Hopf manifolds.
 In \cite{Ma1}, using the Kodaira's classification of Hopf surfaces Daniel Mall obtained the classification of nonsingular holomorphic foliations on Hopf surfaces.  Motivated by this, we address the problem of classify nonsingular holomorphic foliations on dimension and codimension one on Hopf manifolds of dimension at least three.  We will consider the following types of Hopf manifolds.
\begin{definition}
We say that
\begin{enumerate}
\item $X$ is\textbf{ classical } if $\mu=\mu_1=\ldots=\mu_n$.
\item $X$ is \textbf{generic} if  $0<|\mu_1|\leq|\mu_2|\leq\ldots\leq|\mu_n|<1$ and there not exists non-trivial relation between the $\mu_i$'s in this way
$$\prod_{i\in A}\mu^{r_{i}}_{i}=\prod_{j\in B}\mu^{r_{j}}_{j},\quad r_i,r_j\in\mathbb{N},\quad A\cap B=\emptyset,\quad A\cup B=\{1,2,\ldots,n\}.$$
\item $X$ is \textbf{intermediary} if $\mu_1=\mu_2=\ldots=\mu_r$, where $2\leq r\leq n-1$ and there not exists non-trivial relation between the $\mu_i$'s in this way
$$\prod_{i\in A}\mu^{r_{i}}_{i}=\prod_{j\in B}\mu^{r_{j}}_{j},\quad r_i,r_j\in\mathbb{N},\quad A\cap B=\emptyset,\quad A\cup B=\{1,r+1,\ldots,n\}.$$
\end{enumerate}
\end{definition}
A line bundle $L$ on $X$ is the quotient of  $W\times \mathbb{C}$  by the
operation of a representation of the fundamental group
of  $X$,
$
  \varrho_{L}:   \pi_1(X)\simeq \mathbb{Z}  \longrightarrow   GL(1,\mathbb{C})  =  \mathbb{C}^*
$ in the following way
$$
\begin{array}{ccc}
  W\times \mathbb{C}& \longrightarrow &    W\times \mathbb{C}  \\
 (z,v)& \longmapsto  &     (f(x),  \varrho_{L}(\gamma)v)\
\end{array}
$$

We write $L=L_b$ for the bundle induced by the representation $
\varrho_{L}(\gamma)$ with  $b=\varrho_{L}(\gamma)(1).$ We prove the following theorem.

\begin{teo}\label{teo}
Let $X$ be a Hopf manifold, $\dim_{\mathbb{c}} X \geq 3$, and
$\mathcal{F}$ be a nonsingular one-dimensional holomorphic foliation in $X$ given
by a morphism   $ T_{\mathcal{F}}=L_b \rightarrow T_X$. Then the following holds:
\begin{itemize}
\item[(i)] If $X$ is classical, then  $b=\mu^{-m}$ with  $m \in \mathbb{N}$ and $m\geq
-1$. The foliation $\F$ is induced by a polynomial vector field
$$
g_1 \frac{\partial}{\partial
z_1}+\dots +g_n \frac{\partial}{\partial z_n},$$
where $g_i$ are homogeneous
polynomial of the same degree $m+1$, for all $1\leq i\leq n$, with
$\{g_1=\dots=g_n=0\}=\{0\}$.
\item[(ii)]
 If $X$ is generic, then   $b\in \{1, \mu_1, \dots ,\mu_n\}$. The foliation $\F$ is induced by a constant vector field.

\item[(iii)]
If $X$ is intermediary, then   $b\in \{1, \mu_1, \mu_{r+1},\mu_{r+2},\dots ,\mu_n\}$. We have the table

\begin{table}[htbp]
\centering
\begin{tabular}{|r|p{6cm}|}
\hline
$T_{\F}^*$& vector field inducing $\F$\\
\hline\hline
 $L_{1}$& $\sum\limits_{j=1} ^{r} g_j(z_1,\dots,z_r)\frac{\partial}{\partial z_{j}}+
\sum\limits_{k=r+1} ^{n}c^k z_k\frac{\partial}{\partial z_{k}}$ \\

\hline
 $L_{\mu_1}$& $c^1\frac{\partial}{\partial z_{1}}+\dots
+c^{r}\frac{\partial}{\partial z_{r}}$,  $c^{i}\neq 0$ for all $i$ \\
 \hline
 $L_{\mu_j}$ with $j>r$& $\frac{\partial}{\partial z_{j}}$\\
\hline
\end{tabular}
\end{table}
\end{itemize}
\end{teo}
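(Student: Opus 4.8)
The plan is to lift everything to the universal cover $W=\mathbb{C}^n\setminus\{0\}$, where both $T_X$ and the line bundle $L_b$ become trivial, and to translate the data of $\mathcal F$ into a single explicit functional equation. A nonsingular one-dimensional foliation given by $L_b\hookrightarrow T_X$ is the same thing as a nowhere-vanishing global section of $\operatorname{Hom}(L_b,T_X)=T_X\otimes L_b^{-1}$; pulling it back along $W\to X$ yields a nowhere-vanishing holomorphic vector field $v=\sum_{i=1}^n g_i\,\partial/\partial z_i$ on $W$. The generator $f$ of the deck group $\pi_1(X)\cong\mathbb{Z}$ preserves the lifted line field, so $f_*v=h\,v$ for a nowhere-zero holomorphic $h$; since $T_{\mathcal F}$ is a line bundle on $X$, the description of $\operatorname{Pic}(X)$ recalled before the theorem forces $h$ to be the constant $b$, i.e. $f_*v=b\,v$ and $T_{\mathcal F}=L_b$. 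Writing this out with $f(z)=(\mu_1z_1,\dots,\mu_nz_n)$ gives the transformation law
\[
 g_i(\mu_1z_1,\dots,\mu_nz_n)=b^{-1}\mu_i\,g_i(z_1,\dots,z_n),\qquad 1\le i\le n.
\]

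First I would invoke Hartogs: since $n\ge 2$, each $g_i$, holomorphic on $\mathbb{C}^n\setminus\{0\}$, extends to an entire function, so I may expand $g_i=\sum_\alpha c_{i,\alpha}z^\alpha$ with $\alpha\in\mathbb{N}^n$. Substituting into the functional equation and comparing monomials gives the resonance condition
\[
 c_{i,\alpha}\neq 0\ \Longrightarrow\ \mu^\alpha:=\prod_{j=1}^n\mu_j^{\alpha_j}=b^{-1}\mu_i,\qquad\text{equivalently}\qquad b=\mu_i\,\mu^{-\alpha}.
\]
This already confines $b$ to the multiplicative set generated by the $\mu_j$ and reduces the theorem to a purely arithmetic analysis of which multi-indices $\alpha$ can occur, to be carried out separately in the three cases of the Definition.

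In the classical case $\mu_1=\dots=\mu_n=\mu$ the condition reads $\mu^{|\alpha|}=b^{-1}\mu$, so every occurring monomial has the same total degree $|\alpha|$; setting $m=|\alpha|-1$ gives $b=\mu^{-m}$ with $m\ge -1$ and shows each $g_i$ is homogeneous of degree $m+1$. In the generic case multiplicative independence of the $\mu_j$ shows that two monomials in the same $g_i$ would satisfy $\mu^\alpha=\mu^{\alpha'}$, hence $\alpha=\alpha'$, so each component is a single monomial; matching the resonance $b=\mu_i\mu^{-\alpha}$ across the common value of $b$ then forces $b\in\{1,\mu_1,\dots,\mu_n\}$, with $b=\mu_k$ giving $v=c\,\partial/\partial z_k$ and $b=1$ giving the diagonal field $\sum_i c_i z_i\,\partial/\partial z_i$. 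The intermediary case is the hybrid: inside the block $z_1,\dots,z_r$ the repeated multiplier $\mu$ produces classical-type homogeneity, while independence of $\mu,\mu_{r+1},\dots,\mu_n$ kills all mixing with the tail variables; running the resonance count for each admissible $b$ reproduces the three rows of the table. In every case the last step is to impose nonsingularity, i.e. that $v$ be nowhere zero on $W$, which translates into $\{g_1=\dots=g_n=0\}=\{0\}$ in the classical case and into the conditions $c^i\neq 0$ in the normal forms.

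The main obstacle I expect is the bookkeeping in the intermediary case, where one must cleanly separate the classical behaviour in the repeated block from the generic behaviour in the tail and verify that each candidate $b$ produces exactly the listed vector field, with no spurious extra terms. The technical heart is the translation of the multiplicative-independence hypothesis into the two statements \emph{each $g_i$ is a single monomial} and \emph{the total degree is bounded}; and checking that the nowhere-vanishing condition forces precisely the nonvanishing of the listed coefficients (rather than a weaker genericity) needs a short separate argument restricting $v$ to the coordinate subspaces.
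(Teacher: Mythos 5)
Your proposal is correct and takes essentially the same route as the paper's proof: pull back to $W=\mathbb{C}^n\setminus\{0\}$, use Hartogs plus Taylor expansion to obtain the resonance equation $c^k_\alpha\,\mu_1^{\alpha_1}\cdots\mu_n^{\alpha_n}=c^k_\alpha\,\mu_k b^{-1}$ (the paper's equation (\ref{eq1}), with the admissible values of $b$ packaged there in Proposition \ref{propo_lema} via Mall's cohomology theorem, but amounting to the same coefficient arithmetic), then carry out the classical/generic/intermediary case analysis and impose nonsingularity. One small caution: in the generic case resonance matching alone does \emph{not} confine $b$ to $\{1,\mu_1,\dots,\mu_n\}$ (e.g.\ $b^{-1}=\mu_1\mu_2$ is resonant for every component, giving the singular monomial field $\sum_k c^k z_1\cdots z_k^2\cdots\,\partial/\partial z_k$-type solutions), so the restriction-to-coordinate-axes nonvanishing argument you flag at the end is genuinely needed at that step, exactly as in the paper.
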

 In the case that $X$ is a generic Hopf manifold, we have the following result.
\begin{teo}\label{generic_theorem}
All  holomorphic one-dimensional  foliations (possibly singular) on a
generic Hopf manifold  of dimension at least three are induced by monomial vector fields.
\end{teo}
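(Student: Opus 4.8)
The plan is to pull the foliation back to the universal cover $W=\mathbb{C}^n\setminus\{0\}$, extend it across the puncture, and then read off the constraints that the deck transformation $f$ imposes on Taylor coefficients. Concretely, a (possibly singular) one-dimensional foliation $\mathcal{F}$ on $X$ is given by a nonzero morphism $L_b\to T_X$, equivalently a global section of $T_X\otimes L_b^*$; recall that every line bundle on $X$ is some $L_b$. Since $W\to X$ is the universal covering and both $T_W$ and the pullback of $L_b^*$ are trivial on $W$, this section lifts to a holomorphic vector field $v=\sum_{i=1}^n g_i(z)\frac{\partial}{\partial z_i}$ with $g_i$ holomorphic on $W$. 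Because $\dim_{\mathbb{C}}X=n\geq 3\geq 2$, Hartogs' extension theorem extends each $g_i$ to an entire function on $\mathbb{C}^n$, so each admits a globally convergent expansion $g_i=\sum_{\alpha}a_{i,\alpha}z^\alpha$.

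First I would record the equivariance forced by the identification $(z,v)\sim(f(z),bv)$ that defines $L_b$, together with $df=\mathrm{diag}(\mu_1,\dots,\mu_n)$. A direct computation shows that $v$ descends to a section of $T_X\otimes L_b^*$ exactly when $g_i(\mu_1 z_1,\dots,\mu_n z_n)=\frac{\mu_i}{b}\,g_i(z_1,\dots,z_n)$ for every $i$. Substituting the power series and comparing the coefficient of $z^\alpha$ on the two sides yields $\mu^\alpha a_{i,\alpha}=\frac{\mu_i}{b}a_{i,\alpha}$, where $\mu^\alpha=\mu_1^{\alpha_1}\cdots\mu_n^{\alpha_n}$. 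Hence $a_{i,\alpha}\neq 0$ forces $\mu^\alpha=\mu_i/b$.

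The decisive step is the genericity hypothesis. If some component $g_i$ contained two distinct monomials $z^\alpha,z^{\alpha'}$ with $a_{i,\alpha},a_{i,\alpha'}\neq 0$, then $\mu^\alpha=\mu_i/b=\mu^{\alpha'}$ would give $\mu^{\alpha-\alpha'}=1$ with $\alpha-\alpha'\in\mathbb{Z}^n\setminus\{0\}$. Splitting $\alpha-\alpha'$ into its positive and negative parts (allowing zero exponents) produces a nontrivial multiplicative relation among the $\mu_i$ of exactly the type excluded by genericity, a contradiction. Therefore each $g_i$ is either zero or a single monomial $a_i z^{\alpha^{(i)}}$, and $v=\sum_i a_i z^{\alpha^{(i)}}\frac{\partial}{\partial z_i}$ is a monomial vector field inducing $\mathcal{F}$.

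I expect the main difficulty to be bookkeeping rather than conceptual: pinning down the automorphy factor $\mu_i/b$ in the functional equation exactly (which simultaneously tells one which $L_b$ can carry a foliation, recovering the admissible values of $b$), and phrasing the genericity step so that it applies to differences $\alpha-\alpha'$ that may have vanishing entries. This last point is where one must use that the excluded relations are permitted to involve only a proper subset of the indices, i.e. that genericity amounts to the multiplicative independence of $\mu_1,\dots,\mu_n$.
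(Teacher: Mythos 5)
Your proposal is correct and follows essentially the same route as the paper: the authors likewise identify the defining morphism with a section over $W=\mathbb{C}^n\setminus\{0\}$, extend the components by Hartogs' theorem, and compare Taylor coefficients in the equivariance relation $g_k(\mu_1z_1,\dots,\mu_nz_n)=\mu_k b^{-1}g_k(z)$, where the absence of multiplicative relations among the $\mu_i$ forces each $g_k$ to be a single monomial (their equation (\ref{equacaogenerica}), from which the theorem is observed to follow). Your explicit care with differences $\alpha-\alpha'$ having vanishing entries is a point the paper leaves implicit, but it changes nothing in the argument.
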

\begin{definition}
Let $X=W/<f>$ be a Hopf manifold, where  $f(z_1,\dots ,z_n)=(\mu_1 z_1,\dots ,\mu_n z_n)$ is a contraction of $\mathbb{C}^n$, and $\mathcal{F}$
a nonsingular one-dimensional holomorphic foliation on $X$ given
by a morphism   $ T_{\mathcal{F}}=L_b \rightarrow
TX$. We say that $\mathcal{F}$ is \textbf{constant}  if $b=\mu_i$ for some $i=1,\dots ,n$; \textbf{ linear} if $b=1$, and \textbf{polynomial}  in otherwise.
\end{definition}
 It follows from \cite{Ma1} that a nonsingular holomorphic foliation on a Hopf surface has at least a compact leaf. The next result is a generalization of this fact, but only in the case of classical, intermediary or generic Hopf manifolds.

\begin{cor}\label{folhascompactas}
Let $X$ be a Hopf manifold, $\dim( X) \geq 3$, and
$\mathcal{F}$ be a nonsingular one-dimensional foliation in $X$. Then  $\mathcal{F}$ has a compact leaf. Moreover,
if $X$ is classical and $\F$ is a generic foliation (in the sense of \cite{Ad}) with  tangent bundle $T_{\F}=L_{\mu^{-m}}$, then $\F$ has
$$
\frac{m^n-1}{m-1}
$$
compact leaves.
\end{cor}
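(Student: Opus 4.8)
The plan is to feed the explicit normal forms from Theorem~\ref{teo} into one geometric mechanism for producing compact leaves. That mechanism is the following: if $p\in\C^n\setminus\{0\}$ is an eigenvector of $D=\mathrm{diag}(\mu_1,\dots,\mu_n)$, then the line $\ell=\C\cdot p$ is $f$-invariant, and if in addition the vector field $v$ inducing $\F$ is tangent to $\ell$ and nonvanishing on $\ell\setminus\{0\}$, then $(\ell\setminus\{0\})/\langle f\rangle$ is an elliptic curve and a compact leaf of $\F$ on $X$. So I would reduce the existence statement to exhibiting, for each normal form, a common eigendirection of $D$ and $v$, and reduce the counting statement to counting such directions.

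For existence I would simply run through the cases of Theorem~\ref{teo}. In the generic case $\F$ comes from a constant field, and since the $\mu_i$ are pairwise distinct the $f$-invariance of its direction forces it to be a single coordinate field $c_i\,\partial/\partial z_i$, so the axis $\C e_i$ works. In the intermediary case the three rows of the table are covered respectively by $\C e_j$ with $j>r$, by the line $\C(c^1,\dots,c^r,0,\dots,0)$, which is $f$-invariant because it sits in the $\mu_1$-eigenspace $\mathrm{span}(e_1,\dots,e_r)$ (here $\mu_1=\dots=\mu_r$), and by an axis $\C e_k$ with $k>r$; nonsingularity of $\F$ supplies the required nonvanishing. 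In the classical case $D=\mu\,\mathrm{Id}$ makes \emph{every} line $f$-invariant, so it is enough to find one eigendirection of the homogeneous field $v$, which the counting argument below produces.

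For the count I would exploit homogeneity of $v$ (degree $m+1$): the induced direction field on $W$ is invariant under the scaling $\C^*$-action and hence descends to a one-dimensional foliation $\P(\F)$ on $\P^{n-1}$ of degree $m$ (this uses that $v$ is not a multiple of the radial field, which genericity excludes). A point $[p]$ is singular for $\P(\F)$ exactly when $v(p)$ is proportional to $p$, i.e.\ when $\ell_{[p]}$ is $v$-invariant; since every line is $f$-invariant here, each such $[p]$ yields a compact leaf $E_{[p]}$, and distinct singular points give disjoint leaves. A Chern class (Baum--Bott) computation then gives $\#\,\mathrm{Sing}(\P(\F))=\sum_{j=0}^{n-1}m^{j}=\frac{m^n-1}{m-1}$ counted with multiplicity, and the genericity hypothesis of \cite{Ad} makes all these singularities simple and distinct, producing exactly $\frac{m^n-1}{m-1}$ compact leaves of this type.

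The hard part will be showing that there are \emph{no further} compact leaves. I would argue that any compact leaf $E$ of $\F$ lifts to an $f$-invariant leaf $\tilde L\subset W$ and projects under $\pi\colon W\to\P^{n-1}$ either to a singular point of $\P(\F)$ (the leaves already counted) or to a leaf of $\P(\F)$; since $f=\mu\,\mathrm{Id}$ acts trivially on $\P^{n-1}$, compactness of $E$ would force $\pi(\tilde L)$ to have compact, hence algebraic, closure, so that $\P(\F)$ would possess an invariant algebraic curve. Ruling this out is the crux: I would invoke the genericity of \cite{Ad} (a Jouanolou-type statement that a generic foliation on $\P^{n-1}$ admits no invariant algebraic curve) to conclude that the second alternative cannot occur, whence the compact leaves are precisely the $\frac{m^n-1}{m-1}$ elliptic curves over the singular points. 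Making the reduction ``compact leaf $\Rightarrow$ invariant algebraic curve'' fully rigorous, while keeping track of the $\langle f\rangle$-action, is where I expect the real work to lie.
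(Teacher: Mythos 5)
Your proposal is, at bottom, the same argument as the paper's: compact leaves are the elliptic fibers of $\alpha\colon X\to\P^{n-1}$, $\alpha(z)=[z]$, lying over directions invariant under both the contraction and the vector field, with the three cases of Theorem \ref{teo} run through one by one. The constant and linear cases are handled in the paper essentially as you do (invariant axes and eigendirections, with completeness/orbit structure quoted from \cite{book}); note that the generic case with $b=1$, which appears in the proof of Theorem \ref{teo} as a diagonal linear field though not in its statement, falls under the paper's ``linear foliations'' case and under your eigendirection mechanism equally well. In the classical polynomial case the paper works with the endomorphism $\varphi=[g_1:\cdots:g_n]$ of $\P^{n-1}$ rather than with the projectivized foliation: existence of a fixed point comes from the Atiyah--Bott holomorphic Lefschetz theorem, which requires no genericity and also covers the case you exclude where $v$ is a multiple of the radial field (there every fiber of $\alpha$ is a leaf, so compactness is trivial), and the exact count is not proved at all but quoted from \cite[Proposition 4]{Ad}. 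So the step you flag as ``the real work'' --- that a generic $\F$ has no compact leaves beyond the fibers over fixed points --- is precisely the content of the cited proposition, and ``generic in the sense of \cite{Ad}'' is by definition the hypothesis under which it applies; your sketch (a compact leaf pushes down under $\alpha$, since $f=\mu\,\mathrm{Id}$ acts trivially on $\P^{n-1}$, to an algebraic invariant curve, which genericity excludes Jouanolou-style) is a plausible reconstruction of that black box, but the paper does not attempt it.

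One piece of bookkeeping in your count is off, though the blame is shared with the paper. By Theorem \ref{teo}(i), $T_{\F}=L_{\mu^{-m}}$ forces the $g_i$ to be homogeneous of degree $m+1$, so $\varphi$ has degree $m+1$ and the projectivized foliation is cut out by a section of $T\P^{n-1}\otimes\oo(m)$, i.e.\ it has degree $m+1$, not $m$ as you assert; the Lefschetz/Baum--Bott count is then $\sum_{j=0}^{n-1}(m+1)^{j}=\frac{(m+1)^n-1}{m}$ rather than $\frac{m^n-1}{m-1}$. Your ``degree $m$'' claim reproduces the corollary's stated formula but contradicts your own input data: the stated formula is consistent with inducing polynomials of degree $m$, i.e.\ with $T_{\F}=L_{\mu^{-(m-1)}}$, an off-by-one tension already present between Theorem \ref{teo}(i) and the corollary as written. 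Aside from this inherited normalization slip, your argument is sound and, where it goes beyond the paper, it goes in the direction of unpacking a citation rather than diverging from the proof.
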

\begin{proof}
The proof will be divided into three parts:
 \\
\textbf{ Polynomial foliations.}
By Theorem \ref{teo}, there are polynomials foliations only in the classical case.
 Let $\F$ be a polynomial foliation on $X$. By Theorem \ref{teo}  the foliation
$\F$ is induced by a polynomial vector field $v=\sum\limits _{i=1} ^{n} g_i \frac{\partial}{\partial z_i}$ on $W$.
In this case we will consider the surjective morphism
$\alpha: {X}\rightarrow \mathbb{P}^{n-1}, \,\, \alpha(z)=[z]$,
whose fibers are  elliptic curves  $\mathbb{C}^*/<f>$.
The fiber  $\alpha^{-1}(z)$  is contained on a leaf of the  foliation, if and only if,
\begin{center}
$z_ig_j-z_jg_i=0,$ $\forall \  i,j=1,\dots,n.$
\end{center}
Consider the  map $\varphi: \mathbb{P}^{n-1}\rightarrow \mathbb{P}^{n-1}$ defined by
$\varphi(z)=[g_1(z):\ldots :g_n(z)]$.
This map always has fixed points (cf. \cite{atiyah}, pg 459).
Thus the fiber of $\alpha^{-1}(z)$ is a compact leaf of  the foliation. As we saw above,  the number of compact leaves of a generic polynomial vector field can be calculated by the number of fixed points of the correspondent polynomial   map. This follows from \cite[Proposition 4]{Ad}.
 \\
\textbf{ Constant foliations}. A constant foliation is induced by a vector field $v=\frac{\partial}{\partial z_i}$ for some $j=1,\dots,n,$ on $W$. The leaves of this foliation
in $W$ are an axis minus the origin, and planes parallel to this axis. Thus a constant foliation in $X$ has compact leaves.
 \\
\textbf{Linear foliations}. By Theorem \ref{teo}, linear foliations are induced by a vector field of the form
$$v=\sum\limits _{i=1} ^n g_i\frac{\partial}{\partial z_{j},}$$ where $g_i$ is linear polynomial, $1\leq i\leq n$. The vector field $v$ is complete in $\mathbb{C}^n$,
and the orbit of a point $z\in \mathbb{C}^n-\{0\}$ is diffeomorphic to $\mathbb{C}^*$  (cf.  \cite{book}, pg 23). Therefore, the foliation on $X$ has compact leaves.
\end{proof}
 Now, we present some results on codimension-one holomorphic distributions on Hopf manifolds.
\begin{teo}\label{teo1}
Let $X$ be a Hopf manifold, $\dim X \geq 3$, and
$\mathcal{F}$ be a nonsingular codimension-one distribution on $X$ given
by a morphism   $ \mathcal{N}^*_{\mathcal{F}}=L_b \rightarrow
\Omega^1 _X$. Then the following holds:
\begin{itemize}
\item[(i)] If $X$ is classical, then  $b^{-1}=\mu^{m}$ with  $m \in \mathbb{N}$ and $m\geq
1$. Furthermore $\fol$ is induced by a polynomial $1$-form
$$\omega=g_1dz_1+\dots +g_n dz_n,$$ where  $g_i$ are  homogeneous
polynomial of the same degree  $m-1$, for all $1\leq i\leq n$ with
$\{g_1=\dots=g_n=0\}=\{0\}$.
\item[(ii)]
 If $X$ is generic, then  $b^{-1}=\mu_j $ for some $j=1,2,\dots
,n$, and $\fol$ is induced by the $1$-form $\omega= dz_j$.

\item[(iii)] If $X$ is intermediary, then
$b^{-1}\in \{\mu_1, \mu_{r+1},\mu_{r+2},\dots ,\mu_n\}$.
The foliation  $\fol$ is induced by a constant  $1$-form.

\end{itemize}
\end{teo}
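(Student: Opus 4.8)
The plan is to mirror the proof of Theorem \ref{teo}, passing to the universal cover and turning the global twisted $1$-form into an eigenvalue problem for the diagonal contraction $f$. First I would lift the conormal morphism $\mathcal{N}^*_{\mathcal{F}}=L_b\to\Omega^1_X$ to $W=\mathbb{C}^n\setminus\{0\}$. Since $L_b$ becomes trivial upstairs, the morphism is the same datum as a holomorphic $1$-form $\omega=\sum_{i=1}^n g_i\,dz_i$ on $W$; because $n\geq 3$, Hartogs' theorem extends each $g_i$ to an entire function on $\mathbb{C}^n$. The condition that $\omega$ descend to a section of $\Omega^1_X\otimes L_b^{-1}$ translates, via $f^*(dz_i)=\mu_i\,dz_i$, into the functional equation $f^*\omega=b^{-1}\omega$, that is
\[
\mu_i\,g_i(\mu_1 z_1,\dots,\mu_n z_n)=b^{-1}g_i(z_1,\dots,z_n),\qquad 1\le i\le n.
\]
Expanding each $g_i=\sum_\alpha a_{i,\alpha}z^\alpha$ (with $z^\alpha=z_1^{\alpha_1}\cdots z_n^{\alpha_n}$) and comparing coefficients gives the resonance condition: $a_{i,\alpha}\neq 0$ forces $\mu_i\,\mu_1^{\alpha_1}\cdots\mu_n^{\alpha_n}=b^{-1}$. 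Taking absolute values and using $0<|\mu_k|<1$ yields at once $|b^{-1}|=|\mu_i|\prod_k|\mu_k|^{\alpha_k}\le|\mu_i|<1$, with equality exactly when $\alpha=0$. This single identity drives the whole classification.

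In the classical case $\mu_k=\mu$, the resonance reads $\mu^{1+|\alpha|}=b^{-1}$, so $b^{-1}=\mu^{m}$ with $m=|\alpha|+1\geq 1$ and every $g_i$ is homogeneous of degree $m-1$; nonsingularity of $\omega$ on $W$ is precisely the statement that the $g_i$ have no common zero off the origin, which is part (i). For the generic and intermediary cases I would argue as follows. The non-resonance hypothesis first forces each $g_i$ to be a single monomial, since two distinct monomials $z^\alpha,z^\beta$ in the same $g_i$ would give $\mu^{\alpha-\beta}=1$, a nontrivial multiplicative relation among the $\mu_k$ excluded by the hypothesis. Once each nonzero $g_i$ is a monomial $a_i z^{\alpha^{(i)}}$, the common value $\mu_i\mu^{\alpha^{(i)}}=b^{-1}$ across the support of $\omega$ produces further relations; casting them in the partition form $\prod_{k\in A}\mu_k^{r_k}=\prod_{k\in B}\mu_k^{r_k}$ and invoking genericity (respectively, genericity of the reduced system $\{\mu_1,\mu_{r+1},\dots,\mu_n\}$ in the intermediary case) forces every surviving monomial to have degree zero. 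Since a single monomial of positive degree vanishes on a coordinate hyperplane meeting $W$, nonsingularity already excludes the one-term positive-degree configurations, while genericity excludes the multi-term ones (e.g.\ pure powers $z_i^{c_i}$, which would impose a forbidden relation). Combined with the absolute-value bound, this pins down $b^{-1}=\mu_j$ with $\omega=dz_j$ in the generic case, and $b^{-1}\in\{\mu_1,\mu_{r+1},\dots,\mu_n\}$ with $\omega$ a constant $1$-form (supported on the block $\{1,\dots,r\}$ when $b^{-1}=\mu_1$, by the absolute-value argument applied inside the block) in the intermediary case.

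The main obstacle is the bookkeeping in the generic and intermediary cases: one must convert each resonance identity $\mu_i\mu^{\alpha^{(i)}}=\mu_j\mu^{\alpha^{(j)}}$ into exactly the partition relation forbidden by the Definition, taking care that the relation is genuinely nontrivial (the factor $\mu_i^{1+\alpha_i}$ cannot cancel) and that the index set splits as required, padding with zero exponents to cover all of $\{1,\dots,n\}$. The delicate point is that no single estimate suffices: it is the interplay of the three hypotheses — the strict contraction $|\mu_k|<1$, the non-resonance, and the nonvanishing of $\omega$ on $W$ — that simultaneously bounds $|b^{-1}|$, annihilates all higher-order Taylor terms, and forbids nonconstant monomials. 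Assembling these three inputs cleanly, rather than verifying any one inequality, is where the real work lies.
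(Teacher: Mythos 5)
Your setup is the paper's own route: lift to $W=\mathbb{C}^n\setminus\{0\}$, extend the coefficients by Hartogs, and compare Taylor coefficients to get the resonance condition $\mu_k\,\mu_1^{\alpha_1}\cdots\mu_n^{\alpha_n}=b^{-1}$ (the paper reaches the admissible values of $b$ through Mall's cohomology theorem in Lemma \ref{le1}/Proposition \ref{teocod}, but that is the same coefficient computation). Your classical case is complete and correct. The genuine gap is in the generic case, where you claim that the relations $\mu_i\mu^{\alpha^{(i)}}=\mu_j\mu^{\alpha^{(j)}}$ arising from a multi-term $\omega$, cast in partition form, are forbidden by genericity and hence ``force every surviving monomial to have degree zero.'' This is false. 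Setting $\beta^{(i)}=\alpha^{(i)}+e_i$, genericity forces only that all the $\beta^{(i)}$ coincide with a single multi-index $m\in\mathbb{N}^n$, i.e.\ $g_i=c_i z^{m-e_i}$; it places no bound on $|m|$. For example $\omega=z_2\,dz_1+z_1\,dz_2$ satisfies every resonance with $b^{-1}=\mu_1\mu_2$ and imposes no relation among the $\mu_k$ at all (both sides of your would-be partition relation are identically $\mu^m$, so the relation is trivial). Such forms are exactly the singular monomial distributions of Theorem \ref{prop6}, which exist in abundance on generic Hopf manifolds. What excludes them is nonsingularity, not genericity: if $|m|\geq 2$, then either $m=m_je_j$ with $m_j\geq2$ and $g_j=c_jz_j^{m_j-1}$ vanishes on $\{z_j=0\}\cap W$, or $m$ has two indices $j_1\neq j_2$ in its support and every $g_i=c_iz^{m-e_i}$ vanishes on the codimension-two subspace $\{z_{j_1}=z_{j_2}=0\}$, which meets $W$ precisely because $n\geq3$. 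Your nonsingularity argument covers only the one-term case (a single coordinate hyperplane), so the codimension-two configurations --- the very place where the hypothesis $\dim X\geq3$ enters and where Hopf surfaces genuinely behave differently --- are left unhandled.

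The same misassignment of roles breaks your intermediary case. The claim that ``non-resonance forces each $g_i$ to be a single monomial'' fails inside the block: since $\mu_1=\cdots=\mu_r$, two monomials with the same block degree $\alpha_1+\cdots+\alpha_r$ and the same tail exponents satisfy $\mu^{\alpha-\beta}=1$ trivially, so in general $g_k$ is $z_{r+1}^{m_{r+1}}\cdots z_n^{m_n}$ times an \emph{arbitrary} homogeneous polynomial in $z_1,\dots,z_r$ --- which is exactly what the paper's proof records before invoking nonsingularity. The collapse of these polynomials to constants, and of the tail exponents to the single admissible pattern $b^{-1}\in\{\mu_1,\mu_{r+1},\dots,\mu_n\}$, again comes from the nonvanishing of $\omega$ on $W$ via the same codimension argument, not from genericity of the reduced system $\{\mu_1,\mu_{r+1},\dots,\mu_n\}$. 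So the architecture of your argument --- resonance fixes the shape, genericity kills everything of positive degree --- cannot, as written, exclude the singular monomial forms with $|m|\geq2$, and the proof is incomplete until the nonsingularity hypothesis is deployed against the multi-term configurations as well.
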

\par Note that Theorem \ref{teo1} implies that a distribution $\fol$ on an intermediary or generic Hopf manifold is induced by closed 1-form  $\omega$ in $\C^{n}-\{0\}$. In particular, this implies that  $\fol$ is integrable. We state it as follows.
\begin{cor}\label{cor43}
All nonsingular codimension-one holomorphic distributions on an intermediary
or generic Hopf manifold are integrable.
\end{cor}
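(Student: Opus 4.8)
The plan is to read off integrability directly from the normal forms produced in Theorem~\ref{teo1}. Recall that a nonsingular codimension-one distribution $\fol$ cut out locally by a nonvanishing holomorphic $1$-form $\omega$ is integrable, i.e.\ its local tangent hyperplane field is closed under the Lie bracket, exactly when the Frobenius condition $\omega\wedge d\omega=0$ holds. Since integrability is a local condition and $X=W/\langle f\rangle$ is locally biholomorphic to $W=\C^n-\{0\}$, it suffices to produce, on $W$, a $1$-form inducing the pulled-back distribution and to verify this single equation there.

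First I would dispose of the generic case. By Theorem~\ref{teo1}(ii) the distribution is induced on $W$ by $\omega=dz_j$ for some $j$; here $d\omega=0$, so $\omega\wedge d\omega=0$ is automatic and $\fol$ is integrable, its leaves being the hyperplanes $\{z_j=\mathrm{const}\}$. Next, in the intermediary case, Theorem~\ref{teo1}(iii) gives that $\fol$ is induced on $W$ by a constant $1$-form $\omega=\sum_{i=1}^{n}a_i\,dz_i$ with $a_i\in\C$; again $d\omega=0$, so the Frobenius condition holds trivially and $\fol$ is integrable. In both cases $\omega$ is in fact closed (indeed exact), which is exactly the statement advertised in the remark preceding the corollary.

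There is no genuine obstacle here: the entire weight of the argument rests on Theorem~\ref{teo1}, whose classification hands us \emph{closed} defining forms in precisely the two cases under consideration. Once that input is granted, the corollary is a formal consequence of the Frobenius criterion. The only point requiring a word of care is the descent from $W$ to $X$, but this is harmless, since integrability is local and the normal forms above are the $f$-equivariant representatives already used to define $\fol$ on $X$.
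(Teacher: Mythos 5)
Your argument is exactly the paper's: the remark preceding the corollary derives integrability from the fact that Theorem~\ref{teo1} yields a closed (constant) defining $1$-form $\omega$ on $\C^n-\{0\}$ in both the generic and intermediary cases, so $\omega\wedge d\omega=0$ holds trivially. Your proposal is correct and adds only the (harmless, and valid) observation that integrability descends locally from $W$ to $X$.
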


\begin{example} In the  classical case the  Corollary \ref{cor43} is not true. Consider $X=\C^3-\{0\}/<f>$ a classical Hopf manifold of dimension $3$.
The $1$-form $\omega=y^p dx+ x^p dy + z^p dz$ induces on $X$ a non-integrable nonsingular distribution.

\end{example}
\par In the situation of Theorem \ref{teo1}, note that if $\fol$ satisfies the integrability condition we have the following corollary.

\begin{cor}\label{doze} Let $X$ be a Hopf Manifold, $\dim( X) \geq 3$, and
$\mathcal{F}$ be a nonsingular codimension-one foliation on $X$. Then   $\mathcal{F}$ has a holomorphic first integral.
\end{cor}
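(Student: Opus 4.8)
The plan is to combine Theorem \ref{teo1} with the topology of the universal cover $W=\C^n\setminus\{0\}$. Since $\dim X\ge 3$, $W$ is homotopy equivalent to $S^{2n-1}$ with $2n-1\ge 5$, so $H^1(W,\C)=0$; by the holomorphic de Rham theorem every closed holomorphic $1$-form on $W$ is therefore exact, and any such primitive is a holomorphic first integral of the lifted foliation $\widetilde{\F}$. Thus the whole problem reduces to producing, in each of the three cases of Theorem \ref{teo1}, a closed (possibly meromorphic) $1$-form defining $\widetilde{\F}$, taking a primitive, and checking that it descends to $X$.

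First I would dispose of the generic and intermediary cases. There $\F$ is induced by a constant $1$-form $\om=\sum a_i\,dz_i$ (Theorem \ref{teo1}(ii),(iii)), which is already closed, so $\om=dF$ with $F=\sum a_i z_i$ linear. The relation $f^\ast\om=b^{-1}\om$ forces $a_i\ne 0$ only when $\mu_i=b^{-1}$, whence $f^\ast F=b^{-1}F$; so $f$ permutes the fibres of $F$, and since $|b^{-1}|<1$ acts as a contraction on the target line, $F$ descends to a holomorphic first integral of $\F$ on $X$ valued in the elliptic quotient curve $\C^\ast/\langle b^{-1}\rangle$, the fibre $\{F=0\}$ giving the exceptional leaf.

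The classical case is where the work lies. Here $\om=\sum g_i\,dz_i$ with the $g_i$ homogeneous of degree $m-1$. Introduce the radial field $R=\sum z_i\,\partial_{z_i}$ and set $h:=i_R\om=\sum z_i g_i$, homogeneous of degree $m$. Contracting the integrability relation $\om\wedge d\om=0$ with $R$ and using Euler's identity $\mathcal{L}_R\om=m\,\om$, I would obtain
\begin{equation*}
h\,d\om=dh\wedge\om,\qquad\text{hence}\qquad d(\om/h)=0 .
\end{equation*}
Note that $h\equiv 0$ cannot occur: it would make $R$ tangent to $\F$, so $\F$ would descend to a \emph{nonsingular} codimension-one foliation on $\mathbb{P}^{n-1}$, which is impossible for $n-1\ge 2$. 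Thus $\F$ is defined by the closed meromorphic $1$-form $\eta=\om/h$, scaling-invariant with $i_R\eta=1$; decomposing $h=\prod_k p_k$ into irreducible homogeneous factors gives $\eta=\sum_k\lambda_k\,dp_k/p_k+dg$ with $g$ rational of degree $0$, and contracting with $R$ forces $\sum_k\lambda_k\deg p_k=1$. If the exact part $dg$ is nonzero, then $g$ is already a rational first integral (and is $f$-invariant and degree $0$, so descends to $X$); so one is reduced to the purely logarithmic case.

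The \emph{main obstacle} is to upgrade the logarithmic form $\eta=\sum_k\lambda_k\,dp_k/p_k$ to an honest first integral, i.e.\ to show the residues $\lambda_k$ are mutually commensurable, so that $F=\prod_k p_k^{\lambda_k}$ is a genuine (algebraic, essentially meromorphic) first integral of $\widetilde{\F}$; when there is a single factor this is automatic. I expect the commensurability to follow from the compactness of the quotient: since $f^\ast\om=\mu^m\om$ and $f^\ast h=\mu^m h$ we have $f^\ast\eta=\eta$, so $\eta$ descends to a closed meromorphic $1$-form on the compact $X$, and the compact leaves furnished by Corollary \ref{folhascompactas} constrain the monodromy of $\eta$ around $\{h=0\}$ to a rank-one group, forcing the residues to lie in a common rational line. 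Once $F$ is in hand, $\sum_k\lambda_k\deg p_k=1$ yields $f^\ast F=\mu F$, and the descent argument of the second paragraph produces the first integral on $X$; the subcase $m=1$ is already covered, since then the $g_i$ are constants and $\om$ is closed.
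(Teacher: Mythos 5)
Your generic and intermediary cases coincide with the paper's (there $\F$ is given by a constant form, $\om=dT$ with $T$ linear, and one is done), but in the classical case---where all the content lies---your argument has a genuine gap at exactly the step you flag, and the heuristic you offer to close it does not work. First, Corollary \ref{folhascompactas} is about nonsingular \emph{one-dimensional} foliations, so it furnishes no compact leaves for the codimension-one $\F$ and cannot constrain the monodromy of $\eta$. Second, descent to the compact quotient costs nothing and so proves nothing: on a classical Hopf manifold $f=\mu\cdot\mathrm{Id}$, so \emph{every} logarithmic form $\sum_k\lambda_k\,dp_k/p_k$ with $p_k$ homogeneous satisfies $f^*\eta=\eta$ and descends to a closed meromorphic $1$-form on $X$, including those with incommensurable residues whose foliations admit no meromorphic first integral at all; compactness alone does not force the residues into a rational line. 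Third, your side claim that a nonzero exact part makes $g$ a first integral is false: for a closed form $\eta=\sum_k\lambda_k\,dp_k/p_k+dg$ defining $\F$ one needs $dg\wedge\eta=0$, which generally fails (already $\eta=dx/x+dy$ on $\C^2$, with leaves $xe^y=c$, shows the exact part is not a first integral). Finally, even granting commensurability, $F=\prod_k p_k^{\lambda_k}$ is at best meromorphic or multivalued, which falls short of the \emph{holomorphic} first integral the corollary asserts. The root of all these troubles is that after ruling out $h\equiv 0$ you never again use the hypothesis that $\F$ is nonsingular, i.e.\ $\{g_1=\cdots=g_n=0\}=\{0\}$; but this hypothesis is indispensable, since without it the conclusion is simply false (generic logarithmic foliations have no first integral).

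The paper's proof uses that hypothesis head-on and is one line: since $\sing(\om)=\{g_1=\cdots=g_n=0\}=\{0\}$ has codimension $n\geq 3$, Malgrange's singular Frobenius theorem applies to the integrable germ $\om$ at $0\in\C^n$ and yields $\om=g\,df$ with $f,g$ holomorphic and $g(0)\neq 0$, so $f$ is the desired holomorphic first integral (by homogeneity it is defined on all of $W$). If you wished to salvage your route, the nonsingularity would have to enter precisely where the residues are computed---at a smooth point of an invariant component $\{p_k=0\}$ away from the origin, the local structure of the nonsingular integrable $\om=h\,\eta$ constrains the residue $\lambda_k$---but this is considerably more work than simply invoking Malgrange, and your radial-contraction computation (which is sound, and in fact reappears in the paper's proof of Theorem \ref{Teo_brunella} for the \emph{singular} case) is unnecessary here.
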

\begin{proof}
Assume that $\fol$ is induced by the 1-form $\omega$ on $\C^n-\{0\}$.
If $X$ is an intermediary or generic Hopf manifold, Theorem \ref{teo1}
implies that $\omega=dT$, where $T$ is a linear function and the proof ends. Now, if $X$ is classical, applying Theorem \ref{teo1}, we get  $$\omega=g_1dz_1+\dots +g_n dz_n,$$ where  $g_i$ are  homogeneous
polynomial of the same degree  $m-1$, for all $1\leq i\leq n$ with
$\{g_1=\dots=g_n=0\}=\{0\}.$ Since $n\geq 3$ and $\omega$ has an isolated singularity at $0\in\C^n$, then it follows from    Malgrange-Frobenius theorem \cite{book}  that   $\omega$ has a holomorphic first integral.
\end{proof}
 Theorem \ref{teo1} item $(i)$ together with Corollary \ref{doze} extends a  theorem due to  Ghys \cite{Ghys} for  nonsingular codimension-one foliations on classical Hopf manifolds. Moreover, in this work  we are not  supposing  that the distributions are integrable.
Now, in the special case of generic Hopf manifolds we have a more precise result.
\begin{teo}\label{prop6}
Any codimension-one holomorphic distribution (possibly singular) on a generic Hopf manifold of dimension at least three is integrable and induced by a monomial $1$-form.
\end{teo}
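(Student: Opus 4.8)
The plan is to lift the distribution to the universal cover $W=\C^n-\{0\}$, extend the defining form across the origin by Hartogs, and then use genericity (multiplicative independence of the $\mu_i$) to pin down the admissible monomials and read off integrability. First I would observe that a codimension-one distribution given by $\nn^*_\fol = L_b \to \Omega^1_X$ is the same datum as a global section of $\Omega^1_X\otimes L_b^{-1}$, which pulls back to a holomorphic $1$-form $\omega=\sum_{i=1}^n g_i\,dz_i$ on $W$ satisfying the equivariance $f^*\omega=b^{-1}\omega$. Componentwise this reads $\mu_i\,g_i(\mu_1 z_1,\dots,\mu_n z_n)=b^{-1}g_i(z)$ for every $i$. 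Since $n\geq 3$, the origin has codimension $\geq 2$ in $\C^n$, so each $g_i$ extends holomorphically across $0$ by Hartogs; I write its Taylor expansion $g_i=\sum_\alpha a^i_\alpha z^\alpha$.

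Next I would substitute the expansions into the functional equation and compare coefficients of $z^\alpha$. Writing $\mu^\beta=\mu_1^{\beta_1}\cdots\mu_n^{\beta_n}$ and letting $e_i$ be the $i$-th standard basis vector, this gives $\mu^{\alpha+e_i}a^i_\alpha=b^{-1}a^i_\alpha$, so $a^i_\alpha=0$ unless $\mu^{\alpha+e_i}=b^{-1}$. The genericity hypothesis is then invoked as the statement that there is no nontrivial multiplicative relation among the $\mu_i$, i.e. $\mu^{\delta}=1$ for $\delta\in\Z^n$ forces $\delta=0$. Consequently, for each fixed $i$ the equation $\mu^{\alpha+e_i}=b^{-1}$ has at most one solution $\alpha=\alpha^{(i)}\in\mathbb{N}^n$, so $g_i=c_i z^{\alpha^{(i)}}$ is a single monomial (or $g_i\equiv 0$), and $\omega$ is a monomial $1$-form. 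Moreover, for any two surviving indices $i,j$ one has $\mu^{\alpha^{(i)}+e_i}=b^{-1}=\mu^{\alpha^{(j)}+e_j}$, whence genericity yields $\alpha^{(i)}+e_i=\alpha^{(j)}+e_j=:\gamma$. Setting $S=\{\,i:c_i\neq0\,\}$, this forces $\gamma_i\geq 1$ for $i\in S$ and
$$\omega=z^\gamma\sum_{i\in S}c_i\,\frac{dz_i}{z_i},\qquad \mu^\gamma=b^{-1}.$$

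Finally I would check integrability directly. Put $\eta=\sum_{i\in S}c_i\,dz_i/z_i$, so that $\eta$ is closed and $\omega=z^\gamma\eta$. Then $d\omega=d(z^\gamma)\wedge\eta=z^\gamma\,\theta\wedge\eta$, where $\theta=\sum_k\gamma_k\,dz_k/z_k$, and therefore $\omega\wedge d\omega=z^{2\gamma}\,\eta\wedge\theta\wedge\eta=0$ because $\eta$ appears twice in the wedge product. Hence $\omega\wedge d\omega=0$ and the distribution is integrable; in fact $z^\gamma$ is an integrating factor and $\fol$ is defined by the closed logarithmic form $\eta$. I expect the main obstacle to be the bookkeeping in the genericity step: translating the partition-type relation in the definition of \emph{generic} into the clean assertion $\mu^\delta=1\Rightarrow\delta=0$ (in particular handling indices whose exponent vanishes), and verifying that the resulting common vector $\gamma$ satisfies the nonnegativity $\gamma_i\geq 1$ needed for each $z^{\alpha^{(i)}}=z^{\gamma-e_i}$ to be a genuine holomorphic monomial.
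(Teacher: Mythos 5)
Your proposal is correct, and its first half coincides with the paper's route: the paper also lifts the section of $\Omega^1_X\otimes L_{b^{-1}}$ to $W=\C^n-\{0\}$, extends by Hartogs, compares Taylor coefficients, and uses the absence of multiplicative relations among the $\mu_i$ to conclude that each $g_i$ is a single monomial (this is equation (\ref{generica}), derived inside the proof of Theorem \ref{teo1}); your translation of the definition of generic into ``$\mu^\delta=1$, $\delta\in\Z^n$, implies $\delta=0$'' is exactly the right reading of the partition condition, since any nonzero $\delta$ yields a relation of the prescribed form by splitting indices according to the sign of $\delta_i$ and assigning exponent $0$ to the rest. Where you genuinely diverge is the integrability step. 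The paper verifies $\omega\wedge d\omega=0$ by brute force: it expands $d\omega$, groups the coefficients of $dz_k\wedge dz_j\wedge dz_i$, and checks that the alternating sum $\nu_{kji}-\nu_{kij}-\nu_{jki}+\nu_{jik}-\nu_{ijk}+\nu_{ikj}$ of the symmetric quantities $\nu_{ijk}=c_im_jc_k$ cancels. You instead exploit the structural observation (implicit in the paper's common exponent vector $(m_1,\dots,m_n)$) that all surviving components share $\alpha^{(i)}+e_i=\gamma$, factor $\omega=z^\gamma\eta$ with $\eta=\sum_{i\in S}c_i\,dz_i/z_i$ closed, and get $\omega\wedge d\omega=0$ because $\eta$ repeats in the wedge. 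This is cleaner and buys strictly more than the paper's computation: it exhibits $z^\gamma$ as an integrating factor, identifies $\fol$ as a logarithmic foliation defined by a closed form (hence with a multivalued first integral $\prod_{i\in S}z_i^{c_i}$), whereas the paper only records the vanishing. One cosmetic point you should make explicit: the identity $\omega\wedge d\omega=z^{2\gamma}\,\eta\wedge\theta\wedge\eta$ is a computation on the open dense set $\{z_1\cdots z_n\neq 0\}$ where $\eta$ and $\theta$ are defined, and the conclusion $\omega\wedge d\omega\equiv 0$ on all of $W$ follows because the left side is holomorphic and vanishes on a dense open subset (alternatively, note that $\gamma_i\geq 1$ for every index actually occurring, so all denominators cancel). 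This is a one-line remark, not a gap.
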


Finally we state some results about singular holomorphic distributions on Hopf manifolds.

\begin{teo}\label{singholomorphic}
Let $X$ be a Hopf manifold and  $\mathcal{F}$ be a holomorphic distribution of dimension or codimension one on $X$ with $\cod(\sing(\fol))\geq 2$. Then
\begin{enumerate}
\item if $n=2$ then  $Sing(\mathcal{F})=\emptyset $,
\item if  $n\geq 3$ then either  $Sing(\mathcal{F})=\emptyset$ or  $Sing(\mathcal{F})$ has at least a positive codimension component.
\end{enumerate}
\end{teo}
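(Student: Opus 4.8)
The plan is to reduce everything to an elementary statement about $f$-invariant analytic subsets of $\C^n$, and in particular to avoid any use of the classical/generic/intermediary trichotomy, which is what allows the conclusion to hold for an \emph{arbitrary} Hopf manifold. First I would lift the distribution from $X$ to its universal cover $W=\C^n-\{0\}$ (recall $W\simeq S^{2n-1}$ is simply connected for $n\geq 2$). A one-dimensional distribution $T_\fol=L_b\to T_X$ pulls back to a holomorphic vector field $v=\sum_{i=1}^n g_i\,\partial/\partial z_i$ on $W$, and a codimension-one distribution $\nn^*_\fol=L_b\to\Omega^1_X$ pulls back to a holomorphic $1$-form $\om=\sum_{i=1}^n g_i\,dz_i$ on $W$; in either case the coefficients $g_i$ are holomorphic on $W$, and since $n\geq 2$ they extend holomorphically across the origin by Hartogs. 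The set $\singf$ lifts, in both situations, to the common zero locus $\widehat Z=\{g_1=\dots=g_n=0\}\subset\C^n$ intersected with $W$ and read modulo $\langle f\rangle$; thus the covering $\pi\colon W\to X$ gives $\pi^{-1}(\singf)=\widehat Z\cap W$ and $\dim\singf=\dim(\widehat Z\cap W)$.

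The decisive structural feature is the behaviour of the $g_i$ under $f$. Writing out the condition that $v$ (resp. $\om$) be equivariant for the representation $\varrho_{L_b}$ defining $L_b$, one obtains functional equations $g_i\circ f=\lambda_i\,g_i$ with nonzero constants $\lambda_i$ (explicitly $\lambda_i=\mu_i/b$ in the dimension-one case and $\lambda_i=(b\mu_i)^{-1}$ in the codimension-one case). Because $\lambda_i\neq 0$, the equivalence $g_i(z)=0 \iff g_i(f(z))=0$ holds for every $i$, so $\widehat Z$ is invariant under both $f$ and $f^{-1}$. The equivariance also shows each $g_i$ is supported on monomials $z^\alpha$ with $\prod_j\mu_j^{\alpha_j}=\lambda_i$, a condition with finitely many solutions since $0<|\mu_j|<1$; hence each $g_i$ is a polynomial and $\widehat Z$ is a genuine closed analytic subset of all of $\C^n$.

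The heart of the argument is the claim that if $\widehat Z\cap W\neq\emptyset$ then $\dim_0\widehat Z\geq 1$. To prove it I would choose $p\in\widehat Z$ with $p\neq 0$ and use that $f$ is a contraction: the orbit $\{f^k(p)\}_{k\geq 0}$ lies entirely in $\widehat Z$, consists of nonzero points, and converges to $0$. Since $\widehat Z$ is closed it follows that $0\in\widehat Z$ and that $0$ is not isolated in $\widehat Z$. A zero-dimensional analytic set is discrete, so the accumulation of the $f^k(p)$ at $0$ forces $\dim_0\widehat Z\geq 1$; hence $\widehat Z$ carries an irreducible component $C$ through $0$ with $\dim C\geq 1$, and as $C\neq\{0\}$ we get $C\cap W\neq\emptyset$ with $\dim(C\cap W)=\dim C\geq 1$. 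Therefore $\dim(\widehat Z\cap W)\geq 1$, and so $\dim\singf\geq 1$ in $X$.

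Finally I would combine this claim with the codimension hypothesis $\cod(\singf)\geq 2$. When $n=2$ the hypothesis forces $\dim(\widehat Z\cap W)\leq 0$, which together with the claim leaves only $\widehat Z\cap W=\emptyset$, that is $\singf=\emptyset$; this is item (1). When $n\geq 3$ the claim yields exactly the dichotomy of item (2): either $\singf=\emptyset$, or $\widehat Z\cap W$ — and hence $\singf$ — has a component of positive dimension (necessarily of codimension $\geq 2$ by the hypothesis). The step I expect to require the most care is making the analytic-geometry argument airtight: one must confirm that $\widehat Z$ is a closed analytic subset of \emph{all} of $\C^n$ via the Hartogs extension, and invoke the local structure theory of analytic sets correctly to pass from ``$0$ is a non-isolated point of $\widehat Z$'' to ``$\widehat Z$ has a positive-dimensional irreducible component through $0$.''
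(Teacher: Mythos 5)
Your proof is correct, but it takes a genuinely different route from the paper's. The paper argues by contradiction through residue theory: if $\mathrm{Sing}(\mathcal{F})$ were nonempty and zero-dimensional, the Baum--Bott theorem (Izawa's version in the codimension-one case) would give $c_n(TX\otimes L)=\sum_p \mu_p(\xi)>0$; on the other hand, Mall's computation of the Hodge numbers of $X$ gives $c_n(TX)=0$, and since $X$ is diffeomorphic to $S^{2n-1}\times S^1$ one has $H^2(X,\mathbb{Z})=0$, hence $c_1(L)=0$ and $c_n(TX\otimes L)=c_n(TX)=0$, a contradiction. You instead lift everything to the universal cover $W=\mathbb{C}^n-\{0\}$, note that the coefficient functions satisfy $g_i\circ f=\lambda_i g_i$ with $\lambda_i\neq 0$ (your constants $\mu_i/b$ and $(b\mu_i)^{-1}$ match the equivariance equations the paper itself derives in the proofs of its classification theorems), so that the lifted singular locus $\widehat{Z}$ is $\langle f\rangle$-invariant and, after Hartogs, a closed analytic subset of all of $\mathbb{C}^n$; the contraction dynamics then force any nonzero point of $\widehat{Z}$ to have forward orbit in $\widehat{Z}$ accumulating at $0$, so $0$ is a non-isolated point of $\widehat{Z}$, $\dim_0\widehat{Z}\geq 1$, and the resulting positive-dimensional branch through $0$ meets $W$ and descends through the covering map to a positive-dimensional component of $\mathrm{Sing}(\mathcal{F})$. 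The steps you flagged as delicate do go through: closedness of $\widehat{Z}$ in $\mathbb{C}^n$ already follows from Hartogs alone (your polynomiality observation, which is correct since $0<|\mu_j|<1$ leaves only finitely many admissible multi-indices, is a bonus rather than a necessity), and the passage from ``$0$ non-isolated'' to ``$\dim_0\widehat{Z}\geq 1$'' is exactly the standard fact that a zero-dimensional germ of an analytic set is an isolated point. Comparing the two approaches: yours is elementary and self-contained, treats the dimension-one and codimension-one cases uniformly (the paper needs two distinct residue theorems), requires no integrability, and pinpoints where the positive-dimensional component lives; the paper's Chern-class argument is shorter given the machinery and isolates the real obstruction --- the vanishing $c_n(TX\otimes L)=0$, valid on any compact manifold with $H^2(X,\mathbb{Z})=0$ and $c_n(TX)=0$ --- making it more portable to other intersection-theoretic settings. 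One cosmetic point: the statement's ``positive codimension component'' must be read, as both your proof and the paper's contradiction with isolated points make clear, as ``positive-\emph{dimensional} component.''
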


To prove Theorem \ref{singholomorphic}, we will use Residues theorems of Baum-Bott type (cf. \cite{Baum} and \cite{izawa}). Note that when $n=2$, the above result implies that there are not singular holomorphic foliations on $X$. Hence the classification of holomorphic foliations due by Mall \cite{Ma1} is complete. For codimension-one singular holomorphic foliations on classical Hopf manifolds we prove the following alternative.
\begin{teo}\label{Teo_brunella}
Let $\F$ be a singular codimension-one foliation on a  classical Hopf manifold of dimension at least three. Then
\begin{itemize}
 \item  either $\mathcal{F}$ has  an analytic invariant hypersurface,
 \item  or $\mathcal{F}$ has one dimensional subfoliation by elliptic   curves.
\end{itemize}
\end{teo}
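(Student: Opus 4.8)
The plan is to reduce everything to an explicit homogeneous $1$-form on $\C^n$ and then run a symmetry argument of Brunella type using the radial vector field.

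First I would lift $\F$ to $W=\C^n\setminus\{0\}$. Since $\dim X\geq 3$, the set $W$ is simply connected and carries no nontrivial line bundles, so the conormal bundle $N_{\F}^*=L_b$ pulls back to the trivial bundle and $\F$ is described on $W$ by a single holomorphic integrable $1$-form $\tilde\omega=\sum_{i=1}^n g_i\,dz_i$. Because $n\geq 3$, Hartogs' extension theorem makes each $g_i$ extend holomorphically across the origin, so $\tilde\omega$ is defined on all of $\C^n$. The descent of $\F$ to $X$ is equivalent to an equivariance $f^*\tilde\omega=c\,\tilde\omega$ with $f(z)=\mu z$ and $c$ a multiplier; comparing coefficients gives $\mu\,g_i(\mu z)=c\,g_i(z)$, and decomposing each $g_i$ into homogeneous components and using that $\mu$ is not a root of unity forces all the $g_i$ to be homogeneous of one common degree $d\geq 0$ (the multiplier being the corresponding power of $\mu$). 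Thus $\F$ is induced on $\C^n$ by a homogeneous integrable $1$-form $\omega=\sum_i g_i\,dz_i$.

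Next I would bring in the radial (Euler) vector field $R=\sum_i z_i\,\frac{\partial}{\partial z_i}$. A direct check shows $f_*R=R$, so $R$ descends to a global holomorphic vector field on $X$ whose orbits are exactly the elliptic fibers $\C^*/\langle f\rangle$ of the fibration $\alpha\colon X\to\mathbb{P}^{n-1}$ used in Corollary \ref{folhascompactas}. By Euler's identity applied to the homogeneous coefficients, $L_R\omega=(d+1)\,\omega$, so $R$ is an infinitesimal symmetry of $\F$. I then set $h=i_R\omega=\sum_i z_i g_i$, a homogeneous polynomial of degree $d+1\geq 1$ vanishing at the origin, and split into two cases according to whether $h$ vanishes identically. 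If $h\equiv 0$, then $R$ is everywhere tangent to $\F$, so the orbits of $R$---the elliptic curves $\C^*/\langle f\rangle$---are tangent to $\F$ and provide the desired one-dimensional subfoliation by elliptic curves.

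If $h\not\equiv 0$, I claim $S=\{h=0\}$ is $\F$-invariant. This is the crux: by Cartan's formula $dh=L_R\omega-i_R\,d\omega=(d+1)\,\omega-i_R\,d\omega$, and contracting the integrability relation $\omega\wedge d\omega=0$ by $R$ gives $\omega\wedge i_R\,d\omega=h\,d\omega$; combining these yields $\omega\wedge dh=-h\,d\omega$, which is divisible by $h$, so $S$ is invariant. Since $h$ is homogeneous, $S$ is a nonempty cone invariant under the contraction $f$, hence descends to an analytic invariant hypersurface of $X$, as wanted. The main obstacle is precisely this invariance computation, where both integrability of $\omega$ and the fact that $R$ is a symmetry are essential; the remaining care is bookkeeping---checking that $d\geq 0$ so that $h$ has positive degree and $S$ is a genuine hypersurface, and that in the tangent case the $R$-orbits close up to honest elliptic curves rather than degenerate fibers, which follows from the structure of $\alpha$.
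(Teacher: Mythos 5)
Your proof is correct and follows essentially the same route as the paper: reduce to a homogeneous integrable $1$-form on $\C^n$ (you rederive the homogeneity from the equivariance $f^*\tilde\omega=c\,\tilde\omega$, which is the same computation the paper invokes via its classification), split according to whether $h=i_R\omega$ vanishes identically --- tangency to the elliptic fibration $\alpha$ --- and otherwise prove invariance of $\{h=0\}$ by combining $L_R\omega=(d+1)\omega$ with the contraction of $\omega\wedge d\omega=0$ by $R$. Incidentally, your identity $\omega\wedge dh=-h\,d\omega$ carries the correct sign, whereas the paper's displayed version $\omega\wedge d(i_R\omega)=(i_R\omega)\,d\omega$ has a harmless sign slip that does not affect the invariance conclusion.
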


We remark that Theorem \ref{Teo_brunella} should be viewed as an analogous version of \textit{Brunella - Conjecture}. More precisely, Marco Brunella conjectured that a codimension-one holomorphic foliation $\fol$ on $\mathbb{P}^n$, $n\geq 3$, satisfy the following alternative: either $\mathcal{F}$ has  an algebraic invariant hypersurface, or $\mathcal{F}$ has one-dimensional subfoliation by algebraic curves, see \cite{cerveau}.

\section{Cohomology of line bundles on Hopf manifolds}
\par Let $\Omega^{p}_{X}$ be the sheaf of germs of holomorphic $p$-forms
on a Hopf manifold $X$. Denote by $\Omega^{p}_{X}(L_b):=\Omega^{p}_{X}\otimes L_b$ and by $\pi:W\to X$ the natural projection on $X$. Consider a open covering $\{U_{i}\}$ of $X$ such that all sets open $U_{i}$ are Stein, simply-connected  and $\tilde{U}_{i}:=\pi^{-1}(U_{i})$ is a disjoint union of Stein open sets on $W$. Since $\pi$ is surjective, we have $A=\{\tilde{U}_{i}\}$ is open covering of $W$. It follows from the definition that
$$\displaystyle\tilde{U}_{i}=\cup_{r\in\mathbb{Z}}f^{r}(U_{i}).$$

\par Let $\varphi\in\Gamma(U_i,\Omega^{p}_{X}(L_b))$.  Then $\tilde{\varphi}=\pi^{*}(\varphi)$ belongs to $\Gamma(\tilde{U}_i,\pi^{*}(\Omega^{p}_{X}(L_b)))\cong\Gamma(\tilde{U}_i,\Omega^{p}_{W})$. Therefore we have a exact sequence of C\u{e}ch complexes
\begin{equation}\label{sequencia de Cech}
0\rightarrow \mathcal{C}^\textbf{.}(A, \Omega_{X} ^p(L_b))\stackrel{\pi^*}{\longrightarrow}
\mathcal{C}^\textbf{.}(A, \Omega^p _W) \stackrel{bId-f^*}{\longrightarrow} \mathcal{C}^\textbf{.}(A, \Omega_W ^p)\longrightarrow 0.
\end{equation}
From this we derive the long exact sequence of cohomology
 \begin{eqnarray*}
&&0\longrightarrow H^0(X, \Omega_{X} ^p (L_b))\longrightarrow H^0(W, \Omega_W ^p) \stackrel{p_0}{\longrightarrow} H^0(W, \Omega_W ^p) \longrightarrow  H^1(X, \Omega_{X} ^p (L_b))\rightarrow
\end{eqnarray*}		
where
$p_0=b\cdot Id-f^*: H^0 (W, \Omega_W ^1)\rightarrow H^0 (W, \Omega_W ^1)$
and $W=\mathbb{C}^{n}-\{0\}$.
D. Mall proved in  \cite{Ma} the following result.
\begin{teo}[Mall \cite{Ma}] \label{teocoh}
If $X$ is a Hopf manifold of dimension $n\geq 3$ and $L_b$ is a line bundle on $X$. Then
\begin{eqnarray*}\label{equacao99}
&&dim H^0(X, \Omega_{X} ^1(L_b))=dim H^0(X, \Omega_{X} ^{n-1}(L_b))=dim\,Ker(p_0)
\end{eqnarray*}
\end{teo}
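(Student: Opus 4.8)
The plan is to read everything off the long exact sequence displayed just before the statement, reducing each cohomology group of $X$ to a kernel or cokernel of the twisted pullback $b\cdot\mathrm{Id}-f^*$ on the cohomology of $W=\mathbb{C}^n-\{0\}$, and then to compare the two relevant spaces by Serre duality. First I would record the linear-algebra input about $f^*$. Since $n\geq 3$, Hartogs' theorem shows that every holomorphic $p$-form on $W$ extends across the origin, so $H^0(W,\Omega^p_W)$ is the space of entire $p$-forms on $\mathbb{C}^n$, with monomial basis $z^\alpha\,dz_{i_1}\wedge\cdots\wedge dz_{i_p}$. On each such basis element $f^*$ multiplies by the scalar $\mu^\alpha\mu_{i_1}\cdots\mu_{i_p}$, so $p_0=b\cdot\mathrm{Id}-f^*$ is diagonal and its kernel is spanned by those monomials whose eigenvalue equals $b$. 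In particular, exactness of the sequence at its first two terms gives at once $H^0(X,\Omega^1_X(L_b))\cong\ker(p_0)$, which is the outer equality $\dim H^0(X,\Omega^1_X(L_b))=\dim\ker(p_0)$.

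Next I would isolate the genuine content, the middle term. The analogous sequence for $p=n-1$ gives $H^0(X,\Omega^{n-1}_X(L_b))\cong\ker\bigl(p_0^{(n-1)}\bigr)$, where $p_0^{(n-1)}=b\cdot\mathrm{Id}-f^*$ acts on $H^0(W,\Omega^{n-1}_W)$, again diagonally in the monomial basis $z^\alpha\,\widehat{dz_i}$. To relate this kernel to $\ker(p_0)$ on $\Omega^1_W$ I would pass through Serre duality on the compact manifold $X$. Using $\Omega^{n-1}_X\cong T_X\otimes K_X$, one identifies the Serre dual of $H^0(X,\Omega^{n-1}_X(L_b))$ with $H^n\bigl(X,\Omega^1_X(L_{b^{-1}})\bigr)$.

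Feeding this group back into the long exact sequence for $p=1$ with parameter $b^{-1}$ should complete the reduction. Here I would use the cohomology of $W$: covering $W$ by the $n$ Stein opens $\{z_i\neq 0\}$ gives $H^n(W,\Omega^1_W)=0$, and the local-cohomology comparison with $\{0\}\subset\mathbb{C}^n$ gives $H^q(W,\Omega^1_W)=0$ for $0<q<n-1$, with the only surviving higher group $H^{n-1}(W,\Omega^1_W)$ having a Laurent-monomial basis $z^\gamma\,dz_i$ (all $\gamma_k\leq -1$) on which $f^*$ is still diagonal. Consequently $H^n\bigl(X,\Omega^1_X(L_{b^{-1}})\bigr)$ is the cokernel of $b^{-1}\cdot\mathrm{Id}-f^*$ on $H^{n-1}(W,\Omega^1_W)$; since this operator is diagonal, its kernel and cokernel have equal dimension, and the whole question is reduced to the combinatorics of $f^*$-eigenvalues on that Laurent basis.

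The main obstacle is precisely this last comparison. It requires matching the index set $\{(\gamma,i):\mu^\gamma\mu_i=b^{-1},\ \gamma_k\leq -1\}$ controlling the cokernel against the index set $\{(\alpha,i):\mu^\alpha\mu_i=b\}$ controlling $\ker(p_0)$ on $\Omega^1_W$. This is where the specific diagonal form of the contraction $f$ must be used in full, and it is the delicate point of the argument; everything else is a formal consequence of the exact sequence (\ref{sequencia de Cech}) together with the vanishing theorem for the cohomology of $\mathbb{C}^n-\{0\}$ and Serre duality on $X$.
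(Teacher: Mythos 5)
Your two exact-sequence identifications are correct and, in fact, they are the whole content of Theorem \ref{teocoh} as this paper actually uses it: left-exactness of the long sequence attached to (\ref{sequencia de Cech}) gives $H^0(X,\Omega^1_X(L_b))\cong\ker\bigl(b\,\mathrm{Id}-f^*\mid H^0(W,\Omega^1_W)\bigr)$ and, verbatim for $p=n-1$, $H^0(X,\Omega^{n-1}_X(L_b))\cong\ker\bigl(b\,\mathrm{Id}-f^*\mid H^0(W,\Omega^{n-1}_W)\bigr)$. Note that the paper offers no proof of its own here --- the statement is cited from Mall \cite{Ma}, with the derivation preceding it amounting exactly to your first step --- so there is no ``paper proof'' to match beyond this. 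Your Serre-duality step is also correct: $H^0(X,\Omega^{n-1}_X(L_b))^*\cong H^n\bigl(X,\Omega^1_X(L_{b^{-1}})\bigr)$, which via $H^n(W,\Omega^1_W)=0$ is the cokernel of $b^{-1}\mathrm{Id}-f^*$ on $H^{n-1}(W,\Omega^1_W)$ with its Laurent basis.

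The genuine gap is the step you defer to the end, and it cannot be closed: the middle equality of the statement, read literally with $p_0$ acting on $H^0(W,\Omega^1_W)$, is false, so no matching of your two index sets exists. Concretely, on a classical Hopf manifold take $b=\mu$, $n\geq 3$: the $f^*$-eigenvalue on $z^\alpha dz_i$ is $\mu^{|\alpha|+1}$, so $\ker(p_0)$ is spanned by $dz_1,\dots,dz_n$ and has dimension $n$, whereas the eigenvalue on $z^\alpha\,\widehat{dz_i}$ is $\mu^{|\alpha|+n-1}\neq\mu$, so $H^0(X,\Omega^{n-1}_X(L_\mu))=0$. Your own combinatorics detects this: solving $\mu^\gamma\mu_i=b^{-1}$ with all $\gamma_k\leq-1$ via $\gamma=-\alpha-\mathbf{1}$ puts the cokernel index set in bijection with $\{(\alpha,i):\mu^\alpha\mu_1\cdots\mu_n\mu_i^{-1}=b,\ \alpha\geq 0\}$, i.e. the Serre-dual count reproduces $\dim\ker$ of $b\,\mathrm{Id}-f^*$ on $(n-1)$-forms, not on $1$-forms. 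The paper itself corroborates the discrepancy: Lemma \ref{le1}(i) gives nonvanishing of $H^0(X,\Omega^1_X(L_b))$ for $b=\mu^m$, $m\geq 1$, while Lemma \ref{lema11}(i) gives nonvanishing of $H^0(X,\Omega^{n-1}_X(L_b))$ only for $m\geq n-1$. So the displayed statement misquotes Mall's result; what is true --- and what your first two steps already prove, making the Serre-duality detour unnecessary --- is $\dim H^0(X,\Omega^p_X(L_b))=\dim\ker\bigl(b\,\mathrm{Id}-f^*\mid H^0(W,\Omega^p_W)\bigr)$ separately for each $p$ (here $p=1$ and $p=n-1$), the two kernels living on different spaces and having, in general, different dimensions.
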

\section{Holomorphic foliations}
Let $X$ be a complex manifold. A (nonsingular) \emph{foliation} $\F$, of
dimension $k$, on $X$   is a subvector bundle $T\F \hookrightarrow
T_X$, of generic rank $k$, such that $[T\F,T\F]\subset T\F$.

There is a dual point of view where $\mathcal F$ is determined by a
subvector bundle $N^*_{ \mathcal F}$, of   rank $n-k$, of the
cotangent bundle $\Omega^1_X = T^* X$ of $X$. The  vector bundle
$N_{\mathcal{F}}^*$ is called \emph{conormal vector bundle} of
$\fol$. The involutiveness   condition  is
replace by: if $d$ stands for the exterior derivative
then $dN_{\mathcal{F}}^* \subset N_{\mathcal{F}}^* \wedge
\Omega^1_X$ at the level of local sections.
The normal bundle $N_{ \mathcal F}$ of $\mathcal F$ is defined as
the dual of $N_{\mathcal{F}}^* $. We have the following exact
sequence
\[
0 \to T\mathcal F \to TX \to N_{\mathcal F} \to 0 \, .
\]

The $(n-k)$-th wedge product of the inclusion $N^*_\F
\hookrightarrow \Omega^1_X$ gives rise to a nonzero twisted
differential $(n-k)$-form $\omega\in H^0(X,\Omega^{n-k}_X\otimes
\mathcal{N})$ with coefficients in the line bundle
$\mathcal{N}:=\det(N_\F)$, which is \emph{locally decomposable} and
\emph{integrable}.
By construction the tangent bundle of a Hopf manifold $X$ is given
by
$$
TX=\bigoplus_{i=1}^n L_{\alpha_i},
$$
where $L_{\alpha_i}$ is the tangent bundle of the foliation induced
by the canonical vector field $\frac{\partial}{ \partial z_i}$.

\section{One-dimensional holomorphic foliations }
A nonsingular one-dimensional foliation $\mathcal{F}$ on a Hopf manifold $X$ is given by a line bundle $L_b:=T_{\mathcal{F}}$ on $X$
and an embedding $i:T_{\mathcal{F}}\rightarrow TX$, where $TX$ denotes the tangent bundle of $X$ and $b\in\mathbb{C}^*$. For guarantee the existence of
$\mathcal{F}$ such that $T_{\mathcal{F}}=L_b$ is necessary that $\dim \,H^0(X,TX\otimes L_{b^{-1}})>0$.
The tangent bundle $TX$ is isomorphic to $L_{\mu_1}\oplus\dots \oplus L_{\mu_n}$ and hence
\begin{equation*}
K_X\cong (L_{\mu_1}\otimes L_{\mu_2} \otimes \dots \otimes L_{\mu_n})^{*}\cong
(L_{\mu_1 \mu_2 \dots \mu_n})^*\cong L_{\mu_1 ^{-1}\mu_2 ^{-1}\dots \mu_n ^{-1}},
\end{equation*}
where $K_X$ is the canonical bundle of $X$. Let $L_a$ be a line bundle on $X$ with  $a\in \mathbb{C}^*$. We will  find a condition  on $a$  such that $\dim \,H^0(X,TX\otimes L_a)>0$. From the isomorphism $TX\cong\Omega_X ^{n-1} \otimes  K_X^*$ we get
$
H^0(X,TX\otimes L_a)   \cong H^0(X,\Omega_X ^{n-1} \otimes L_{\mu_1\mu_2\dots \mu_n a} )
$
Thus,
$
\dim H^0(X,TX\otimes L_a)>0$ if, and only if, $ \dim H^0(X,\Omega_X ^{n-1} \otimes L_{\mu_1\mu_2\dots \mu_n a} ) >0.$
\begin{lema}\label{lema11} Let $X$ be a classical, intermediary or generic Hopf manifold of dimension $n\geq3$, and let $L_b$ be a line bundle on $X$, with $b \in \mathbb{C}^*$.
\begin{enumerate}
\item[(i)] If $X$ is classical then $\dim H^0(X,\Omega_X ^{n-1} \otimes L_b)>0$ if, and only if, $b=\mu^m$, with $m \in \mathbb{Z}, \, m\geq n-1$.
\item[(ii)] If $X$ is generic then $\dim H^0(X,\Omega_X ^{n-1} \otimes L_b)>0$ if, and only if,  $b=\mu_1 ^{m_1} \mu_2^{m_2}\dots \mu_n ^{m_n},$ where $\mu_j \in \mathbb{N}$, and there exists $j_0\in \{1,\dots ,n\}$, such that $\mu_{j_0}\geq 0$, with $\mu_j \geq 1$
for all $ j\in \{1,\dots ,n\}\setminus\{j_0\}$.
\item[(iii)] If $X$ is intermediary then $\dim H^0(X,\Omega_X ^{n-1} \otimes L_b)>0$ if, and only if, $$b=\mu_1^{m}\mu_{r+1}^{m_{r+1}}\dots \mu_n ^{m_n},$$ with $m\geq r-1$ and $m_j\geq 1$ for all $j\geq r+1$ or $m\geq r$, and there exists $j_0\geq r+1$ with $m_{j_0}\geq 0$ and $m_j \geq 1$ for all $j \in \{r+1, r+2,\dots ,n\}\setminus\{j_0\}$.
\end{enumerate}
\end{lema}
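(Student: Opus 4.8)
The plan is to reduce the statement to a single linear-algebra computation via the long exact sequence of Section~2 taken in degree $p=n-1$. That sequence yields a canonical isomorphism
$$H^0(X,\Omega_X^{n-1}(L_b))\;\cong\;\ker\bigl(b\cdot\mathrm{Id}-f^*:H^0(W,\Omega_W^{n-1})\to H^0(W,\Omega_W^{n-1})\bigr),$$
so everything comes down to deciding for which $b$ this kernel is nonzero. Because $W=\mathbb{C}^n-\{0\}$ with $n\geq 3$, Hartogs' theorem identifies $H^0(W,\Omega_W^{n-1})$ with the space of $(n-1)$-forms $\sum_{i=1}^n a_i(z)\,\widehat{dz_i}$ having entire coefficients, where $\widehat{dz_i}=dz_1\wedge\cdots\wedge\widehat{dz_i}\wedge\cdots\wedge dz_n$. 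Expanding each $a_i$ in a power series exhibits the monomials $z^\alpha\widehat{dz_i}$ (with $\alpha\in\mathbb{N}^n$, $1\leq i\leq n$) as a topological basis on which $f^*$ acts diagonally.

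Since $f^*z_j=\mu_j z_j$, a direct computation gives
$$f^*\bigl(z^\alpha\widehat{dz_i}\bigr)=\Bigl(\tfrac{\mu_1\cdots\mu_n}{\mu_i}\,\mu^\alpha\Bigr)\,z^\alpha\widehat{dz_i},$$
so the kernel is spanned by the monomials whose eigenvalue equals $b$, and it is nonzero exactly when
$$b=\frac{\mu_1\cdots\mu_n}{\mu_i}\,\mu^\alpha=\prod_{j=1}^n\mu_j^{\beta_j},\qquad \beta_j:=\alpha_j+1-\delta_{ij},$$
for some index $i$ and some $\alpha\in\mathbb{N}^n$. The decisive feature of such an exponent vector is that $\beta_i=\alpha_i\geq 0$ while $\beta_j\geq 1$ for every $j\neq i$; conversely any $\beta\in\mathbb{N}^n$ with at most one vanishing coordinate arises this way. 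Hence the kernel is nonzero iff $b$ admits a representation $b=\prod_j\mu_j^{\beta_j}$ in which at most one exponent vanishes.

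It remains to translate this uniform condition into the three cases, which is precisely where the standing hypotheses enter. In the classical case all $\mu_j=\mu$, so $b=\mu^{|\beta|}$ with $|\beta|\geq n-1$, giving exactly $b=\mu^m$, $m\geq n-1$. In the generic case the defining non-relation among the $\mu_j$ makes the map $\beta\mapsto\prod_j\mu_j^{\beta_j}$ injective, so the representing exponent vector of any $b$ is unique and the \emph{at most one zero} condition becomes the unambiguous statement of~(ii). For the intermediary case I would first absorb the coincident factors by setting $m=\beta_1+\cdots+\beta_r$ and invoke the non-relation on $\{\mu_1,\mu_{r+1},\dots,\mu_n\}$ to fix $m,m_{r+1},\dots,m_n$ uniquely; splitting according to whether the single index permitted a zero exponent lies in $\{1,\dots,r\}$ or in $\{r+1,\dots,n\}$ produces precisely the two branches of~(iii), the former forcing $m\geq r-1$ with every later exponent $\geq 1$, the latter forcing $m\geq r$ with one later exponent allowed to vanish. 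I expect this intermediary bookkeeping to be the only genuine obstacle: one must check that collapsing the repeated factor $\mu_1$ does not conflate two distinct vanishing coordinates, and that genericity on the reduced index set is exactly what makes the two branches both exhaustive and mutually consistent.
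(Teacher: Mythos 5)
Your proposal is correct and follows essentially the same route as the paper: both reduce the statement via Mall's theorem to computing $\ker(b\,\mathrm{Id}-f^*)$ on $H^0(W,\Omega_W^{n-1})$, use Hartogs' extension to expand in monomials $z^\alpha\widehat{dz_i}$ on which $f^*$ acts diagonally with eigenvalue $\mu_1^{\alpha_1+1}\cdots\mu_n^{\alpha_n+1}\mu_i^{-1}$, and then split into the three cases using the non-relation hypotheses. Your reformulation via the exponent vector $\beta_j=\alpha_j+1-\delta_{ij}$ (``at most one vanishing coordinate'') is a slightly cleaner bookkeeping device than the paper's, but the argument is the same, and your case analysis for the intermediary branch matches the paper's conclusion exactly.
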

\begin{proof}
By Theorem \ref{teocoh} we have $\dim H^0(X, \Omega_X ^{n-1}\otimes L_b)=\dim(ker\, p_0)$, where
\begin{center}
$p_0: H^0 (W, \Omega_ W ^{n-1})\longrightarrow H^0 (W, \Omega_ W ^{n-1})$, \quad$p_0=b Id-f^*$ \, and \, $W=\mathbb{C}^n-\{0\}$.
\end{center}
Let $\omega \in H^0(W,\Omega_W ^{n-1})$, then
$\omega= \sum\limits _{i=1} ^{n} g_i dz_1\wedge\dots \wedge  {dz_{i-1}}\wedge dz_{i+1}\wedge\dots \wedge dz_n$.  It  follows from Hartogs extension theorem that each $g_i$ can be represented by its Taylor series
\begin{center}
$g_i(z_1,z_2,\dots ,z_n)=\sum\limits _{\alpha \in \mathbb{N}^n} c_\alpha ^i z_1^{\alpha_1}z_2^{\alpha_2}\dots z_n ^{\alpha_n}$, for all $i=1,\dots ,n.$
\end{center}
Hence
\begin{equation}\label{equacao123}
p_0(\omega)=\sum\limits _{i=1} ^{n} \sum\limits _{\alpha \in \mathbb{N}^n}c_\alpha ^i (b-\mu_1 ^{\alpha_1+1}\dots \mu_n ^{\alpha_n+1}\mu_i ^{-1}) z_1^{\alpha_1} \dots z_n ^{\alpha_n}\widehat{dz_i},
\end{equation}
where $ \widehat{dz_i}:=dz_1\wedge\dots \wedge  {dz_{i-1}}\wedge dz_{i+1}\wedge\dots \wedge dz_n.$
First we consider the classical case. In this case $\mu_1=\dots =\mu_n=\mu$ and
\begin{equation*}
p_0(\omega)=\sum\limits _{i=1} ^{n} \sum\limits _{\alpha \in \mathbb{N}^n}c_\alpha ^i (b-\mu^{\alpha_1+\dots +\alpha_n+n-1}) z_1^{\alpha_1} \dots z_n ^{\alpha_n}  \widehat{dz}^i .\end{equation*}
so that $\dim(ker\,p_0) >0$ if, and only if,
$b=\mu^{m}$, for some $m \in \mathbb{N}$, $m\geq n-1$.
For the generic case, since $\mu_i$ has no relations, it  follows from (\ref{equacao123}) that $\dim(ker\,p_0) >0$ if, and only if,  $b=\mu_1 ^{m_1} \mu_2^{m_2}\dots \mu_n ^{m_n}$ where $\mu_j \in \mathbb{N}$, and there exists $j_0$ such that $\mu_{j_0}\geq 0$ e $\mu_j\geq 1\, \forall j\in \{1,\dots ,n\}\setminus\{j_0\}$.
Finally, for the intermediary case, we have $\mu_1=\dots =\mu_r=\mu$, so that
\begin{equation*}
p_0(\omega)=\sum\limits _{i=1} ^{n} \sum\limits _{\alpha \in \mathbb{N}^n}c_\alpha ^i (b-\mu ^{\alpha_1+\dots +\alpha_r+r} \mu_{r+1}^{\alpha_r+1}\dots \mu_n ^{\alpha_n+1}\mu_i ^{-1}) z_1^{\alpha_1}\dots z_n ^{\alpha_n}
\widehat{dz}^i
\end{equation*}

Since $\mu, \mu_{r+1},\dots ,\mu_n$ has no relations, we have
$\dim(ker\,p_0)>0$ if, and only if,
 $b=\mu^{m}\mu_{r+1}^{m_{r+1}}\dots \mu_n ^{m_n}$, with $m\geq r-1$ and $m_j\geq 1$ for all $j\geq r+1$, or $b=\mu_1^{m}\mu_{r+1}^{m_{r+1}}\dots \mu_n ^{m_n}$, with $m\geq r$, and there exists $j_0\geq r+1$ with $m_{j_0}\geq 0$ and $m_j\geq 1$ for all $j \in \{r+1, r+2,\dots ,n\}\setminus\{j_0\}$.
\end{proof}
The above lemma implies the following proposition.
\begin{proposition}\label{propo_lema}
Let $X$ be as in Lemma \ref{lema11} and $L_{a}$ be a line bundle on $X$. The following statements holds:
\begin{enumerate}
\item[(i)] If $X$ is classic then $\dim H^{0}(X,TX\otimes L_a)>0$ if, and only if, $a=\mu^{d}$, where $d\in \mathbb{Z}$ with $d\geq -1$.
\item[(ii)] If $X$ is generic then $\dim H^{0}(X,TX\otimes L_a)>0$ if, and only if, $$a=\mu^{d_{1}}\mu^{d_{2}}_{2}\ldots\mu^{d_{n}}_{n}$$ where there exists $j_{0}\in\{1,\ldots,n\}$ with $d_{j_{0}}\geq -1$ and $d_{j}\geq 0$ for all $j\in\{1,\ldots,n\}\setminus\{j_{0}\}$.
\item[(iii)] If $X$ is intermediary then $\dim H^{0}(X,TX\otimes L_a)>0$ if, and only if, $a=\mu^{d}\mu^{d_{r+1}}_{r+1}\ldots\mu^{d_{n}}_{n}$ where $d\geq -1$ and $d_{j}\geq 0$ for all $j\geq r+1$ or $d\geq 0$ and there exists $j_0\geq r+1$ with $d_{j_{0}}\geq -1$ and $d_{j}\geq 0$ for all $j\in\{r+1,r+2,\ldots,n\}\setminus\{j_{0}\}$.
\end{enumerate}

\end{proposition}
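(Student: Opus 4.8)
The plan is to reduce the statement entirely to Lemma \ref{lema11} by means of the isomorphism recorded just before that lemma, namely
$$H^0(X,TX\otimes L_a)\cong H^0\bigl(X,\Omega_X^{n-1}\otimes L_{\mu_1\mu_2\cdots\mu_n a}\bigr),$$
which comes from $TX\cong\Omega_X^{n-1}\otimes K_X^*$ together with $K_X^*\cong L_{\mu_1\mu_2\cdots\mu_n}$. Setting $b:=\mu_1\mu_2\cdots\mu_n\,a$, the inequality $\dim H^0(X,TX\otimes L_a)>0$ is equivalent to $\dim H^0(X,\Omega_X^{n-1}\otimes L_b)>0$, so all that remains is to insert this particular $b$ into the three cases of Lemma \ref{lema11} and rewrite the resulting constraints on the exponents of $b$ as constraints on the exponents of $a$.

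In the classical case $\mu_1=\cdots=\mu_n=\mu$, whence $b=\mu^n a$. Lemma \ref{lema11}(i) forces $b=\mu^m$ with $m\geq n-1$, so $a=\mu^{m-n}$; putting $d:=m-n$ gives $d\geq -1$, which is precisely (i). In the generic case $b=\mu_1\cdots\mu_n a$, and Lemma \ref{lema11}(ii) forces $b=\mu_1^{m_1}\cdots\mu_n^{m_n}$ with $m_{j_0}\geq 0$ for some $j_0$ and $m_j\geq 1$ otherwise; dividing by $\mu_1\cdots\mu_n$ and setting $d_j:=m_j-1$ converts these into $d_{j_0}\geq -1$ and $d_j\geq 0$ for $j\neq j_0$, which is (ii). The intermediary case proceeds identically: since $\mu_1=\cdots=\mu_r=\mu$ we have $b=\mu^r\mu_{r+1}\cdots\mu_n a$, so writing $b=\mu^m\mu_{r+1}^{m_{r+1}}\cdots\mu_n^{m_n}$ and putting $d:=m-r$, $d_j:=m_j-1$ for $j>r$ turns the two alternatives of Lemma \ref{lema11}(iii) into the two alternatives of (iii).

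The only point requiring care, and the one I regard as the sole genuine obstacle, is the bookkeeping of the exponent shifts introduced by the canonical twist: in the classical case the total exponent drops by $n$ (because $\mu_1\cdots\mu_n=\mu^n$), in the generic case each variable drops by one, and in the intermediary case the repeated eigenvalue $\mu$ drops by $r$ while each of $\mu_{r+1},\dots,\mu_n$ drops by one. One must verify that the two branches of the intermediary statement align correctly after the shift—that $m\geq r-1$ becomes $d\geq -1$ and $m\geq r$ becomes $d\geq 0$, with the distinguished index $j_0$ preserved—and that no solutions are created or destroyed in the process; the latter is automatic, since multiplication by the fixed unit $\mu_1\cdots\mu_n$ is a bijection on monomials, so the equivalences of Lemma \ref{lema11} transport verbatim. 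With the translation checked branch by branch, the three equivalences follow at once.
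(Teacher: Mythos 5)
Your proof is correct and takes essentially the same approach as the paper: the paper's own proof consists precisely of invoking the isomorphism $H^0(X,TX\otimes L_a)\cong H^0(X,\Omega_X^{n-1}\otimes L_{\mu_1\mu_2\cdots\mu_n a})$ and citing Lemma \ref{lema11}. Your explicit exponent bookkeeping (the shifts $d=m-n$ in the classical case, $d_j=m_j-1$ in the generic case, and $d=m-r$, $d_j=m_j-1$ in the intermediary case) accurately fills in the translation that the paper leaves implicit.
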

\begin{proof}
Since $
H^0(X,TX\otimes L_a)  \cong  H^0(X,\Omega_X ^{n-1} \otimes L_{\mu_1\mu_2\dots \mu_n a} )$ the proposition follows from  Lemma \ref{lema11}.
\end{proof}

\subsection{ Proof Theorem \ref{teo}}
The morphism $T_{\mathcal{F}}=L_b \rightarrow TX$ gives rise to a section \mbox{$s\in H^0(X,TX\otimes L_{b^{-1}})$}. On the other hand,  we have the isomorphism
$$\displaystyle TX\otimes L_{b^{-1}} \cong (W \times \mathbb{C} ^n )/(f \times A b^{-1}),$$ where
 $W=\mathbb{C}^n \setminus \{0\}$ and
$A  =\left(\mu_1,\dots,\mu_n \right)
.$
Therefore, a section \mbox{$s\in H^0(X,TX\otimes L_{b^{-1}})$} correspond to a section $\tilde{s} \in H^0(W, \mathcal{O}_W ^n)$, say $\tilde{s}=(g_1,\dots ,g_n)$, where
$g_i \in \mathcal{O}_W$ satisfying:
\begin{center}
$g_k(\mu_1 z_1,\dots ,\mu_n z_n)= \mu_k b^{-1} g_k(z_1,\dots ,z_n)$ for all $k=1,\dots ,n$.
\end{center}
It follows from Hartogs extension theorem that $\tilde{s}$
can be represented by its Taylor series

\begin{center}
$g_k(z_1,\dots ,z_n)=\sum{c_\alpha ^k z^\alpha}$ , $\alpha =(\alpha_1, \alpha_2, \dots ,\alpha_n) \in \mathbb{N}^n$, $k=1,2,\dots ,n$.
\end{center}
Then for all $k=1,\dots ,n$, we have
\begin{equation}\label{eq1}
c_{\alpha}^k \mu_1 ^{\alpha_1} \mu_2 ^{\alpha_2}\dots  \mu_ n ^{\alpha_n}=c_{\alpha}^k \mu_k b^{-1},
\textrm{ where } \alpha =(\alpha_1, \alpha_2, \dots ,\alpha_n) \in \mathbb{N}^n.
\end{equation}
\textbf{Classical case}.
In this case, we have $\mu_1=\dots =\mu_n=\mu$. Then by Proposition \ref{propo_lema} part $(i)$, we have $b^{-1}=\mu^{m}$ with $m\geq -1$. Then, it  follows from equation (\ref{eq1}) that
\begin{equation*}
c_{\alpha}^k \mu ^{\alpha_1+\dots  + \alpha_n}=c_{\alpha}^k \mu^{m+1},
\textrm{ where } \alpha =(\alpha_1, \alpha_2, \dots ,\alpha_n) \in \mathbb{N}^n.
\end{equation*}
If $c_{\alpha}^k \neq 0$, then
$\mu^{\alpha_1+\dots  + \alpha_n}=\mu^{m+1}$ with $m\geq -1$. Thus
$\alpha_1+\dots  + \alpha_n=m+1\geq 0$.
This shows that each $g_k$ is a homogeneous polynomial of degree $m+1$ and the foliation $\mathcal{F}$ is given by a polynomial vector field $g_1\frac{\partial}{\partial{z}_{1}}+\ldots+g_n\frac{\partial}{\partial{z}_{n}}$, where $g_{i}$ are homogeneous polynomial of degree $m+1\geq 0$, for all $1\leq i\leq n$.
\\
\textbf{Generic case.}
In this case,   the $\mu_i$ are not related. Then by Proposition \ref{propo_lema} part $(ii)$, we have
$b^{-1}=\mu_1 ^{d_1}\mu_2 ^{d_2}\dots \mu_n ^{d_n}$ such that there exists $d_{j_{0}}\geq -1$ and $d_{j}\geq 0$ for all $j\in\{1,\ldots,n\}\setminus\{j_{0}\}$. From the equation (\ref{eq1}), if $c_{\alpha}^k \neq 0$ then $b^{-1}= \mu_1 ^{\alpha_1} \mu_2 ^{\alpha_2}\dots  \mu_ n ^{\alpha_n} \mu_k ^{-1}$ and so that $\alpha_j=d_{j}$ for $j\neq k$ and $\alpha_k=d_k+1$. Hence for all $j=1,\ldots,n$ we get
\begin{equation}\label{equacaogenerica}
g_k(z_1,\dots ,z_n)=c^k z_1^{d_1}\dots z_k^{d_k+1}\dots z_n^{d_n},
\end{equation}
where $c^k=c^k _\alpha$, $\alpha=(d_1,d_2,\dots,d_k+1,\dots,d_n)$.
Since $\fol$ is nonsingular, there are two possibilities:
\begin{itemize}
\item [(i)] $d_{j_0}=-1$ and $d_j=0$ for $j\neq j_0$. In this case  $b=\mu_{j_0}$ and the foliation is induced by a vector field of the form $v=\frac{\partial}{\partial z_{j_0}}$.
\item [(ii)]$d_j=0$ for all $j=1,\dots,n$. In this case  $b=1$ and the linear vector field $v=c^1z_1 \frac{\partial}{\partial z_{1}}+\dots +c^n z_n\frac{\partial}{\partial z_{n}}$ with
$c^j\neq0\, \forall \, j=1,\dots ,n$, induces the foliation.
\end{itemize}
\textbf{Intermediary case.}
In this case, we have $\mu_1=\dots =\mu_r=\mu$. Then by Proposition \ref{propo_lema} part $(iii)$, we have
\begin{equation}\label{equa21}
b^{-1}=\mu^{d} \mu_{r+1}^{d_{r+1}}\dots \mu_n^{d_n},
\end{equation}
where  $d\geq -1$ and $d_j\geq 0$ for $j \geq r+1$
or $d\geq 0$  and there is $j_0\geq r+1 $ with $d_{j_0}\geq-1$ and $d_j\geq 0$ for all $j \in \{r+1, r+2,\dots ,n\}\setminus\{j_0\}$. From the equation (\ref{eq1}) we deduce that
\begin{center}
$\mu^{\alpha_1+\alpha_2+\dots +\alpha_r} \mu_{r+1}^{\alpha_{r+1}}\dots \mu_{n}^{\alpha_n}=\mu_k \mu^d\mu_{r+1} ^{d_{r+1}}\dots \mu_n^{d_n}$.
\end{center}
Since $\mu, \mu_{r+1},\dots ,\mu_n$ are not related we have

\begin{flushleft}
$g_k=z_{r+1}^{d_{r+1}}z_{r+2}^{d_{r+2}}\dots z_n ^{d_n} \sum\limits_ {\alpha \in \mathbb{N}^r} c_{\alpha} ^kz_1 ^{\alpha_1}\dots z_r ^{\alpha_r}
\textrm{ with } |\alpha|=d+1  \textrm{ and } 1\leq k\leq r,$\\
$
g_k=z_{r+1}^{d_{r+1}}z_{r+2}^{d_{r+2}}\dots z_k^{d_k+1}\dots  z_n ^{d_n} \sum\limits_ {\alpha \in \mathbb{N}^r} c^k _{\alpha} z_1 ^{\alpha_1}\dots z_r ^{\alpha_r} \textrm{ with } |\alpha|=d \textrm{ and } r+1\leq k\leq n.
$
\end{flushleft}
Since $\fol$ is nonsingular, there are three possibilities:

\begin{itemize}
\item [(i)] $d=-1$ and $d_j=0$ for all $j\geq r+1$. In this case $b=\mu=\mu_1=\dots=\mu_r$ and the vector field $$v=c^1 \frac{\partial}{\partial z_1}+\dots +c^r \frac{\partial}{\partial z_r},$$ for some $c^j\neq 0$, induces the foliation.

\item [(ii)] $d=0$ and $d_j=0$ for all $j\geq r+1$. So $b=1$ and
 \begin{center}
$v=g_1\frac{\partial}{\partial z_1}+\dots +g_r \frac{\partial}{\partial z_r}+
c^{r+1}z_{r+1}\frac{\partial}{\partial z_{r+1}}+\dots +c^{n}z_{n}\frac{\partial}{\partial z_{n}}$
\end{center}
with
\begin{center}
$g_k= \sum\limits_ {i=1} ^{r}  c^k _ {i} z_i $ para $1\leq k\leq r$
\end{center}
\item [(iii)] $d=0$, $d_{j_0}=-1$, for some $j_{0}\in \{r+1,\dots, n\}$ and $d_j=0$ for all $j\in \{r+1,\dots, n\}\setminus \{j_{0}\}$. In this case $b=\mu_{j_0}$ and $v=\frac{\partial}{\partial z_{j_0}}$.

\end{itemize}

\begin{re}
Notice that Theorem \ref{generic_theorem} follows  from equation (\ref{equacaogenerica}).
\end{re}

\section{One-codimensional holomorphic foliations }
In this section we will give a classification nonsingular holomorphic distributions and foliations of codimension-one on classical, generic or intermediary Hopf manifolds.
\begin{lema}\label{le1}
Let $X$ be a classical, generic or intermediary Hopf manifold of dimension $n\geq3$, and $L_a$ be a line bundle on $X$, with $a\in \mathbb{C}^*$. The following holds:
 \begin{enumerate}
\item[(i)] If $X$ is classical then $\dim\,H^0(X,\Omega_X ^1 \otimes L_a)>0$ if, and only if, $a=\mu^m$, where $m\in \mathbb{N}, \, m\geq1$.
\item[(ii)] If $X$ is generic then $\dim\, H^0(X,\Omega_X ^1 \otimes L_a)>0$ if, and only if, $$a=\mu_1 ^{m_1}\mu_2 ^{m_2}\dots \mu_n ^{m_n}$$ where $m_i\in \mathbb{N}$ and there exists $j_0\in \{1,\dots  ,n\}$, such that $m_{j_0}\geq 1$.
\item[(iii)] If $X$ is intermediary then  $\dim\,H^0(X,\Omega_X ^1 \otimes L_a)>0$ if, and only if, $$a=\mu_1^{m_1} \mu_2^{m_2}\dots \mu_n^{m_n}$$ with
$\mu_j \in \mathbb{N}$ for all $j=1,\dots ,n$, and
 $m_1+m_2+\dots +m_r\geq 1$, $m_j \geq 0$ for $j \geq r+1$ or \mbox{$m_1+m_2+\dots +m_r\geq 0$}, $m_j \geq 0$ for $j \geq r+1$, and there exists $j_0\geq r+1$ with $m_{j_0} \geq 1$.
\end{enumerate}

\end{lema}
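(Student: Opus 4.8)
The plan is to run the same argument that proves Lemma~\ref{lema11}, but now for $1$-forms rather than $(n-1)$-forms. By Theorem~\ref{teocoh} applied with $b=a$, we have
\[
\dim H^0(X,\Omega_X^1\otimes L_a)=\dim\ker(p_0),\qquad p_0=a\cdot\mathrm{Id}-f^*,
\]
where $p_0$ acts on $H^0(W,\Omega_W^1)$ and $W=\C^n-\{0\}$. Thus it suffices to decide for which $a\in\C^*$ this kernel is nonzero, and the entire problem is reduced to a monomial bookkeeping on Taylor coefficients.

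First I would take a general section $\omega=\sum_{i=1}^n g_i\,dz_i\in H^0(W,\Omega_W^1)$ and use Hartogs' extension theorem to expand each $g_i$ as a convergent Taylor series $g_i=\sum_{\alpha\in\mathbb{N}^n}c_\alpha^i z^\alpha$ about the origin. Since $f(z)=(\mu_1z_1,\dots,\mu_nz_n)$, one has $f^*dz_i=\mu_i\,dz_i$ and $f^*g_i=\sum_\alpha c_\alpha^i\mu^\alpha z^\alpha$, with $\mu^\alpha=\mu_1^{\alpha_1}\cdots\mu_n^{\alpha_n}$. Substituting yields the analogue of \eqref{equacao123},
\[
p_0(\omega)=\sum_{i=1}^n\sum_{\alpha\in\mathbb{N}^n}c_\alpha^i\bigl(a-\mu_1^{\alpha_1}\cdots\mu_n^{\alpha_n}\mu_i\bigr)z^\alpha\,dz_i,
\]
so that $\ker p_0\neq 0$ if and only if there exist an index $i$ and a multi-index $\alpha\in\mathbb{N}^n$ with $a=\mu_1^{\alpha_1}\cdots\mu_i^{\alpha_i+1}\cdots\mu_n^{\alpha_n}$.

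Next I would translate this single monomial condition into the three cases using the relation hypotheses. In the classical case $\mu_1=\cdots=\mu_n=\mu$ it reads $a=\mu^{|\alpha|+1}$ with $|\alpha|\geq 0$, i.e.\ $a=\mu^m$, $m\geq 1$, giving (i). In the generic case, where there are no relations, setting $m_j=\alpha_j$ for $j\neq i$ and $m_i=\alpha_i+1$ gives $a=\mu_1^{m_1}\cdots\mu_n^{m_n}$ with all $m_j\geq 0$ and $m_i\geq 1$; conversely any such expression produces a valid $\alpha$, which is (ii) with $j_0=i$. In the intermediary case $\mu_1=\cdots=\mu_r=\mu$ with $\mu,\mu_{r+1},\dots,\mu_n$ unrelated, I would split according to whether the distinguished index $i$ lies in $\{1,\dots,r\}$ or in $\{r+1,\dots,n\}$: the former raises the exponent of $\mu$ to $\geq 1$ (the branch $m_1+\cdots+m_r\geq 1$), while the latter keeps it $\geq 0$ but forces some $m_{j_0}\geq 1$ with $j_0\geq r+1$, recovering exactly the two alternatives in (iii).

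The genuinely delicate point is the well-definedness of these monomial representations, which is precisely where the no-relation hypotheses are used: in the generic (resp.\ intermediary) case the absence of a nontrivial relation $\prod_{i\in A}\mu_i^{r_i}=\prod_{j\in B}\mu_j^{r_j}$ guarantees that the exponent vector attached to $a$ is unique, so the stated equivalences are unambiguous. I expect the main obstacle to be this bookkeeping step — correctly matching the two exponent constraints of (iii) to the two possible positions of $i$, and checking that every admissible tuple indeed arises from some nonzero $c_\alpha^i$ — whereas the cohomological reduction and the computation of $p_0(\omega)$ are entirely parallel to Lemma~\ref{lema11}.
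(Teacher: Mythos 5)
Your proposal is correct and follows essentially the same route as the paper's proof: reduce via Theorem~\ref{teocoh} to computing $\ker(a\cdot\mathrm{Id}-f^*)$ on $H^0(W,\Omega_W^1)$, expand the coefficients in Taylor series by Hartogs, and read off the monomial condition $a=\mu_1^{\alpha_1}\cdots\mu_i^{\alpha_i+1}\cdots\mu_n^{\alpha_n}$, then split into the three cases using the no-relation hypotheses exactly as the paper does. The only addition you make is the explicit remark that a monomial $1$-form realizes each admissible exponent tuple, which the paper leaves implicit.
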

\begin{proof}
According to Theorem \ref{teocoh} we have
$\dim H^0(X, \Omega_X ^1(L_a))=dim\,ker(p_0),$
where
$$p_0=a\cdot Id-f^*: H^0 (W, \Omega_W ^1)\rightarrow H^0 (W, \Omega_W ^1)$$
and $W=\mathbb{C}^{n}-\{0\}$. Let $\omega \in \Gamma(W,\Omega_W ^1)$, so that $\omega= \sum\limits _{i=1} ^{n} g_i dz_i$, applying Hartogs extension theorem, each $g_i$, $1\leq i\leq n$, can be represented by its Taylor series
\begin{center}
$g_i(z_1,z_2,\dots ,z_n)=\sum\limits _{\alpha \in \mathbb{N}^n} c_\alpha ^i z_1^{\alpha_1}z_2^{\alpha_2}\dots  z_n ^{\alpha_n}$.
\end{center}
Then $
p_0(\omega)=\sum\limits _{i=1} ^{n} \sum\limits _{\alpha \in \mathbb{N}^n}c_\alpha ^i (a-\mu_1 ^{\alpha_1} \dots  \mu_n ^{\alpha_n}  \mu_i) z_1^{\alpha_1}z_2^{\alpha_2}\dots  z_n ^{\alpha_n}dz_i \,.
$
Hence
\begin{center}
$p_0(\omega)=0 \Leftrightarrow \forall \alpha \in \mathbb{N}^n, \,i \in \{1,\dots ,n\}$ this implies that $c_\alpha ^i=0$ or
$a=\mu_1 ^{\alpha_1} \dots \mu_n ^{\alpha_n}  \mu_i$.
\end{center}

If $X$ is a classical Hopf, that is, $\mu_1=\dots =\mu_n=\mu$ then
$p_0(\omega)=0 \Leftrightarrow \forall \alpha \in \mathbb{N}^n \textrm{ and } \,i \in \{1,\dots ,n\} \textrm{ so that } c_\alpha ^i=0$ or $a=\mu^{\alpha_1+\dots +\alpha_n+1}$.
Therefore,
$dim(ker p_0)>0 \Leftrightarrow a=\mu^m \textrm{ with } m \in \mathbb{N}, \, m\geq 1$.
If $X$ is generic, since  there are not relations between the $\mu_i$'s, we have
$\dim(ker p_0)>0 \Leftrightarrow a=\mu_1^{m_1}\dots \mu_n ^{m_n},$ where $\mu_i \in \mathbb{N}, \, m_i\geq 0$ for all $i\in \{1,\dots ,n\}$ and there exists $i_0 \in \{1,\dots ,n\}$ such that $m_{i_0}\geq 1$.
 Finally, if $X$ is intermediary then $\mu_1=\dots =\mu_r$ so that
$$p_0(\omega)=0 \Leftrightarrow \forall \alpha \in \mathbb{N}^n, \,i \in \{1,\dots ,n\}$$
 and hence $c_\alpha ^i=0$ or $a=\mu_1^{\alpha_1+\dots +\alpha_r}\mu_{r+1}^{\alpha_{r+1}}\dots \mu_n^{\alpha_n}\mu_i$. As there are not relations between $\mu_1,\mu_{r+1},\dots ,\mu_n$ we have
$$\dim(ker\,p_0)>0 \Leftrightarrow a=\mu_1^{m_1+\dots +m_r}\mu_{r+1}^{m_{r+1}}\dots \mu_n ^{m_n},$$
with $m_1+\dots +m_r\geq 1$  and $m_j\geq 0\,\, \forall j\geq r+1$, or $m_1+\dots +m_r\geq 0$, $m_j\geq 0$ $\forall$ $j\geq r+1$  and there exists $j_0\geq r+1$ with $m_{j_0}\geq 1$.\end{proof}
\par As consequence of above lemma, we obtain the following proposition.

\begin{proposition}\label{teocod}
Let $X$ be a Hopf manifold of dimension $n\geq 3$ and let $\mathcal{F}$ be a nonsingular codimension-one holomorphic distribution on $X$ with
$\mathcal{N}_{\mathcal{F}}=L_{b^{-1}}$.
The following holds:
\begin{enumerate}
\item[(i)] If $X$ is classical then $b^{-1}=\mu^m$, com $m \in \mathbb{N}$ and $m\geq 1$.
\item[(ii)] If $X$ is generic then $b^{-1}=\mu_1 ^{m_1} \mu_2^{m_2}\dots \mu_n ^{m_n}$ where $\mu_j \in \mathbb{N}$ for all $j=1,\dots ,n$, e $\mu_1+\dots +\mu_n\geq 1$.
\item[(iii)] If $X$ is intermediary then $b^{-1}=\mu_1^{m_1} \mu_2^{m_2}\dots \mu_n^{m_n}$ where $\mu_j \in \mathbb{N}$ for all $j=1,\dots , n$, and $m_1+m_2+\dots +m_r\geq 1$, $m_j \geq 0$ for $j \geq r+1$ or $m_1+m_2+\dots +m_r\geq 0$ e $m_j \geq 0$ for all $j \geq r+1$ and there exists $j_0\geq r+1$ with $m_{j_0} \geq 1$.
\end{enumerate}
\end{proposition}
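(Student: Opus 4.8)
The plan is to realize the proposition as a direct translation of Lemma \ref{le1}. By definition, a nonsingular codimension-one holomorphic distribution $\F$ on $X$ with normal bundle $\mathcal{N}_{\F}=L_{b^{-1}}$, hence conormal bundle $\mathcal{N}^*_{\F}=L_b$, is precisely the data of an injective sheaf morphism $L_b\hookrightarrow\Omega^1_X$. First I would use the canonical identification
\[
\mathrm{Hom}(L_b,\Omega^1_X)\cong H^0\bigl(X,\Omega^1_X\otimes L_{b^{-1}}\bigr)=H^0\bigl(X,\Omega^1_X\otimes\mathcal{N}_{\F}\bigr),
\]
so that such a morphism corresponds to a global twisted $1$-form $\omega\in H^0(X,\Omega^1_X\otimes L_{b^{-1}})$. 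Since $\F$ is a genuine distribution, the defining morphism is nonzero, hence $\omega\neq 0$ and in particular $\dim H^0(X,\Omega^1_X\otimes L_{b^{-1}})>0$.

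With this reduction in hand, the second step is simply to invoke Lemma \ref{le1} with $a=b^{-1}$. The three items of that lemma then yield exactly the three numerical constraints on $b^{-1}$ asserted in (i), (ii) and (iii). In the classical case this reads $b^{-1}=\mu^m$ with $m\in\mathbb{N}$, $m\geq 1$; in the generic case $b^{-1}=\mu_1^{m_1}\cdots\mu_n^{m_n}$ with some index $j_0$ satisfying $m_{j_0}\geq 1$, which — because each $m_j$ is a nonnegative integer — is equivalent to the stated inequality $\mu_1+\cdots+\mu_n\geq 1$; and in the intermediary case the alternative governing $m_1+\cdots+m_r$ and the tail exponents $m_j$, $j\geq r+1$, is copied verbatim from the lemma.

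As this is a formal corollary of Lemma \ref{le1}, there is essentially no obstacle at this level: all of the substantive work has already been carried out in that lemma, where $\dim H^0(X,\Omega^1_X\otimes L_a)$ is computed as $\dim\ker(p_0)$ via Mall's Theorem \ref{teocoh} and the Taylor-series analysis of $p_0=a\cdot\mathrm{Id}-f^*$. The only point I would flag for care is the passage from distributions to nonzero sections: the nonsingularity of $\F$ ensures that $L_b\hookrightarrow\Omega^1_X$ is a nowhere-vanishing subbundle inclusion, so the associated $\omega$ is genuinely nonzero and the positivity $\dim H^0>0$ that feeds into Lemma \ref{le1} is justified.
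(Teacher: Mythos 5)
Your proposal is correct and follows exactly the paper's route: the paper derives Proposition \ref{teocod} as an immediate consequence of Lemma \ref{le1}, via the identification of the defining morphism $L_b\hookrightarrow\Omega^1_X$ with a nonzero section of $\Omega^1_X\otimes L_{b^{-1}}$, forcing $\dim H^0(X,\Omega^1_X\otimes L_{b^{-1}})>0$ and hence the numerical constraints with $a=b^{-1}$. You even correctly read the paper's typographical slips in item (ii) (the exponents should be $m_j\in\mathbb{N}$ with $m_1+\dots+m_n\geq 1$, not $\mu_j$), so nothing further is needed.
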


\subsection{Proof  of Theorem \ref{teo1}}
By construction,  we have that
$$\Omega_X ^1 \otimes L_{b^{-1}} \cong  (W \times \mathbb{C} ^n )/(f \times A^{-1} b^{-1}),$$ where  $
W=\mathbb{C}^n - \{0\}$ and $
A^{-1} =\left(\mu_1^{-1} ,\dots,\mu_n^{-1}  \right)
.$
Thus, the holomorphic section $s \in H^0(X, \Omega_X ^1 \otimes L_{b^{-1}})$ correspond to a section $\tilde{s} \in H^0(W, \mathcal{O}_W ^n)$, say $\tilde{s}=(g_1,\dots ,g_n)$, such that
$g_k \in \mathcal{O}_W$ satisfies
$g_k(\mu_1 z_1,\dots ,\mu_n z_n)= \mu_k^{-1}b^{-1} g_k(z_1,\dots ,z_n)$, for all $k=1,\dots ,n.$
By Hartog's extension theorem, $\tilde{s}$ can be represented by its Taylor series
\begin{center}
$g_k(z_1,\dots ,z_n)=\sum\limits _{\alpha \in \mathbb{N}^n} c_\alpha ^k z_1^{\alpha_1}\dots  z_n ^{\alpha_n}$, where $\alpha=(\alpha_1,\alpha_2,\dots ,\alpha_n) \in \mathbb{N}^n$ .
\end{center}
Then
\begin{equation}\label{equa6}
c_{\alpha}^k \mu_1 ^{\alpha_1} \mu_2 ^{\alpha_2}\dots  \mu_ n ^{\alpha_n}=c_{\alpha}^k \mu_k^{-1} b^{-1},
 \textrm{ where } \alpha =(\alpha_1, \alpha_2, \dots ,\alpha_n)\in \mathbb{N}^n.
\end{equation}
\textbf{The classical case.} In this case $\mu_1=\dots = \mu_n=\mu$. Proposition \ref{teocod} part $(i)$ implies that
 $b^{-1}=\mu^{m}$ for some
$m\geq 1$.
Therefore
$
c_{\alpha}^{k}\mu^{|\alpha|}=c_{\alpha}^k\mu^{-1}\mu^{m} \textrm{ where } |\alpha|=\alpha_1+\dots +\alpha_n.
$
Hence, if $ c_{\alpha}^{k}\neq 0$ then $|\alpha|=m-1$. It follows that  each $g_k$ is a homogeneous polynomial of degree $m-1$.
\\
\textbf{Generic case.} If $X$ is generic, then by Proposition \ref{teocod} part $(ii)$ we have
$$b^{-1}=\mu_1 ^{m_1} \mu_2^{m_2}\dots \mu_n ^{m_n},$$
where $\mu_j \in \mathbb{N}$, $\mu_j\geq 0$ for all $j \in \{1,\dots ,n\} $ and there exists $j_0 \in \{1,\dots ,n\}$ such that  $m_{j_0}\geq 1$. Then from (\ref{equa6}) we get
\begin{center}
$c_{\alpha}^k \mu_1 ^{\alpha_1} \mu_2 ^{\alpha_2}\dots  \mu_ n ^{\alpha_n}=c_{\alpha}^k \mu_k^{-1} \mu_1 ^{m_1} \mu_2^{m_2}\dots \mu_n ^{m_n} , \textrm{ where } \alpha =(\alpha_1, \alpha_2, \dots ,\alpha_n)\in \mathbb{N}^n.$
\end{center}
Hence for each $k=1,\dots ,n$ we have
\begin{equation}\label{generica}
g_k(z_1,\dots  ,z_n)=c^k z_{1}^{m_1}z_2 ^{m_2}\dots  z_{k}^{m_k -1}\dots  z_n^{m_n} .
\end{equation}
Since $\fol$ is nonsingular, we get that $m_{j_0}=1$ for some $j_{0}\in \{1,\dots, n\}$ and $m_j=0$ for all $j\in\{1,\dots, n\}\setminus\{j_{0}\}$. So that we have $b^{-1}=\mu_{j_0}$, $g_{j_{0}}$ is a constant
and $g_{j}=0$ for all $j\in\{1,\dots, n\}\setminus\{j_{0}\}$.
\\
\textbf{Intermediary case.}
In this case, we have $\mu_1=\dots =\mu_r=\mu$. Moreover, Proposition \ref{teocod} part $(iii)$, implies that
$b^{-1}=\mu_1^{m_1} \mu_2^{m_2}\dots \mu_n^{m_n},$
where \mbox{$m_1+m_2+\dots +m_r\geq 1$}
and $m_j \geq 0$ for $j \geq r+1$ or \mbox{$m_1+m_2+\dots +m_r\geq 0$} and $m_j \geq 0$ for all $j \geq r+1$ and there is $j_0\geq r+1$ with  $m_{j_0} \geq 1$.
 Then from (\ref{equa6}) we get
\begin{center}
$c_{\alpha}^k \mu_1 ^{\alpha_1+\dots +\alpha_r}\mu_{r+1}^{\alpha_{r+1}}\dots  \mu_ n ^{\alpha_n}=c_{\alpha}^k \mu_k^{-1}\mu_1^{m_1+\dots +m_r}\mu_{r+1}^{m_{r+1}}\dots \mu_n ^{m_n}$,
\end{center}
where  $\alpha =(\alpha_1, \alpha_2, \dots ,\alpha_n).$
Since there are no relations between $\mu, \mu_{r+1},\dots ,\mu_n$, we have that for each $k=1,\dots ,r$,
\begin{center}
 $g_k(z_1,\dots ,z_n)=z_{r+1}^{m_{r+1}}\dots  z_{n}^{m_n} \sum\limits_{\alpha} c_{\alpha} ^k z_{1} ^{\alpha_1}\dots  z_r ^{\alpha_r} $, \end{center}
$\textrm{ where }  \alpha_1+\dots +\alpha_r=m_1+\dots +m_r -1\geq 0,$ and for each $k=r+1,\dots ,n$,
\begin{center}
 $g_k(z_1,\dots ,z_n)= z_{r+1}^{m_{r+1}}\dots  z_k^{m_k-1}\dots  z_{n}^{m_n}\sum\limits_{\alpha} c_{\alpha} ^k z_{1} ^{\alpha_1}\dots  z_r ^{\alpha_r},$\end{center}
 $
\textrm{ where }  \alpha_1+\dots +\alpha_r=m_1+\dots +m_r \geq 1.$
Since $\fol$ is nonsingular, we have the following possibilities:
\begin{itemize}
\item [(i)] $b =\mu_1 ^{-1}=\dots =\mu_r ^{-1}$. In this case the foliation is induced by a holomorphic 1-form of the type $\omega=c^1dz_1+\dots +c^rdz_r$ , with $c^{j}\neq 0$ for some $j=1,\dots, r$.
\item [(ii)] $b=\mu_j ^{-1}$ for some $j\in \{ r+1,\dots,n\}$. In this case, the foliation is induced by a 1-form of the type $\omega=dz_{j}$ .
\end{itemize}
\subsection{Proof of Theorem \ref{prop6}}
The equation (\ref{generica}) shows that every codimension-one distribution on a generic Hopf manifold is induced by a monomial  1-form
of the type
\begin{eqnarray*} &&
\omega=\sum\limits _{i=1} ^{n} g_i dz_i, \textrm{ where } g_i(z_1,\dots , z_n)=c_i z_{1}^{m_1}z_2 ^{m_2}\dots z_{i}^{m_i -1}\dots z_n^{m_n},
\textrm{ with } m_j\geq 0,
\end{eqnarray*}
Now, we show that $\omega \wedge d\omega=0$. Calculating $d\omega$ we obtain
\begin{equation*}
d\omega= \sum\limits _{i,j =1} ^{n} c_i m_j  z_{1}^{m_1}z_2 ^{m_2}\dots z_i ^{m_i-1} \dots z_j ^{m_j-1}\dots z_n^{m_n} dz_j\wedge dz_i.
\end{equation*}
Define
$\nu_{ijk}=c_i m_j c_k=c_k m_j c_i=\nu_{kji}$ and $$z_{ijk}= z_{1}^{2m_1}z_2 ^{2m_2}\dots z_k ^{2m_k -1}\dots z_j ^{2m_j-1}\dots z_i ^{2m_i-1}\dots z_n^{2m_n}.$$ Then, we get
\begin{eqnarray*}&&
w\wedge dw = \sum\limits _{k<j<i} (\nu_{kji}- \nu_{kij}- \nu_{jki}+\nu_{jik} - \nu_{ijk} + \nu_{ikj}) z_{ijk} dz_k \wedge dz_j\wedge dz_i =0
\end{eqnarray*}
\section{Singular foliations on Hopf manifolds}
In this section we study singular holomorphic foliations (and distributions) on Hopf manifolds.
Firstly,  we prove  Theorem \ref{singholomorphic}.
\subsection{ Proof of Theorem \ref{singholomorphic}}
Let $\xi\in H^0(X, TX\otimes L)$ be a section inducing $\F$.
Suppose by contradiction that $Sing(\xi)$  is nonempty  and  has only isolated points. By Baum-Bott Theorem
\cite{Baum} we have
\begin{equation}\label{eq7}
c_n(TX\otimes L)=\sum\limits_{\{p: \xi (p)=0\}} \mu_p(\xi)>0
\end{equation}
where $\mu_p(\xi)$ denotes the Milnor number of $\xi$ at $p$, and $c_n$ the top Chern class.
 For the one-codimensional case the idea of the proof is the same.
 In this case we use the Baum-Bott type Theorem  due to T. Izawa \cite{izawa}.
On the other hand,
\begin{equation}\label{eq3}
c_n(TX)= \sum\limits_{p,q} (-1)^{p+q} h^{p,q},
\end{equation}  where $h^{p,q} =\dim H^q(X, \Omega^p)$ is the Hodge number.
Mall in \cite{Ma} showed that
\begin{equation}\label{eq4}
h^{0,0}=h^{0,1}=h^{n,n}=h^{n,n-1}=1 \textrm{ and } h^{p,q}=0 \textrm{ in all other cases}.
\end{equation}
Then from (\ref{eq3}) and (\ref{eq4}) we have that $c_n(TX)=0$. Furthermore, since $X$ is diffeomorphic to $S^{2n-1}\times S^1$, by K$\ddot{\textrm{u}}$nneth formula we get
$H^2(X,\mathbb{Z})=0$.
In particular, the first Chern class $c_1(L)\in H^2(X,\mathbb{Z})$ vanishes. Then
\begin{center}
$c_n(TX\otimes L)=\sum\limits_{j=0}^{n} c_j(TX)c_1(L)^{n-j}= c_n(TX)=0$
\end{center}
which contradicts the equation (\ref{eq7}).

\subsection{Proof of Theorem \ref{Teo_brunella}}

We prove the  Brunella type  alternative  for holomorphic foliations on classical Hopf manifolds.
Let $\omega$ be a $1$-form on $\mathbb{C}^n-\{0\}$ inducing $\F$. Then we get
$\omega=g_1dz_1+\dots +g_n dz_n,$ where  $g_i$ are  homogeneous
polynomials of the same degree  $k$ for all $1\leq i\leq n$. Let $\alpha: {X}\rightarrow \mathbb{P}^{n-1}, \,\, \alpha(z)=[z]$, the natural morphism. Then is either  $\F$  is transversal to a generic fiber of $\pi$ or  $\F$ is tangent to fibration $\pi$.

In the first case, the contraction $f:=i_R \omega=\sum\limits _{i=1} ^{n} z_i g_i$, of the $\omega$ by radial vector field  $R=\sum\limits _{i=1} ^{n} z_i \frac{\partial}{\partial z_i}$, is a not identically zero analytic function on $X$.
We will show that the analytic hypersurface $\{f=0\}$ is invariant by ${\F}$. In fact, the integrability condition, $\omega \wedge d \omega=0$, implies that $ i_R\omega \wedge d\omega + \omega \wedge i_R d\omega=0$.
On the other hand,
\begin{eqnarray*}
i_R d\omega +d(i_R \omega)&=&\sum _{i=1} ^{n} i_R(dg_i \wedge dz_i)+ d\left(\sum\limits _{i=1} ^{n} z_i g_i\right)=(k+1)\omega.
\end{eqnarray*}
Taking the exterior product with $ \omega $, we get $\omega \wedge i_R d\omega  + \omega \wedge d(i_R\omega )=0$ and
by using this equation and  $i_R\omega \wedge d\omega + \omega \wedge i_R d\omega=0$,  we obtain
$\omega \wedge d(i_R\omega)=(i_R\omega ) d\omega.$
Thus  $\{i_R\omega =0\}$ is invariant by $\F$. On the other hand, if   $\F$ is tangent to the fibration $\pi$, then $\F$ is
subfoliated by  elliptic  curves.
\\
\\
\noindent
\textbf{Acknowlegments.}  We are grateful to Bruno Sc\'ardua and J. Omegar Calvo-Andrade
for interesting conversations and corrections. We also would like to thank the referee for pointing out corrections and suggestions.

\end{document}